\documentclass[8pt]{article}
\pdfoutput=1 
\usepackage[english]{babel}
\usepackage{amsmath}
\usepackage{amssymb}
\usepackage{theorem}
\usepackage{soul}

\usepackage{afterpage}
\usepackage[retainorgcmds]{IEEEtrantools}
\usepackage{graphicx}
\usepackage{subfigure}
\graphicspath{{figss/}}
\usepackage{color}
\usepackage{pstricks,pst-all,pstricks-add,pst-plot,pst-eucl}
\usepackage{enumerate}

\addtolength{\hoffset}{-2.0cm}
\addtolength{\textwidth}{3.0cm}
\addtolength{\voffset}{-2.5cm}
\addtolength{\textheight}{3.5cm}

\theorembodyfont{\it}
\newtheorem{theorem}{Theorem}

\newtheorem{lemma}{Lemma}

\theorembodyfont{\rm}

\newtheorem{remark}{Remark}

\makeatletter
\renewcommand{\fnum@figure}{\small\textbf{\figurename~\thefigure}}
\makeatother

\definecolor{brown}{rgb}{0.64,0.16,0.16}
\definecolor{ForestGreen}{rgb}{0.13,0.54,0.13}
\definecolor{purple}{rgb}{0.62,0.12,0.94}
\definecolor{DodgerBlue}{rgb}{0.11,0.56,0.98}
\definecolor{RoyalBlue}{rgb}{0.25,0.41,0.88}
\definecolor{B}{rgb}{0,0,1}
\definecolor{G}{rgb}{0,0.502,0}
\definecolor{R}{rgb}{1,0,0}

\def \RR {\mathbb{R}}


\title{An organizing center in a planar model of\\neuronal excitability}
\author{Alessio Franci$^{1,\dag}$, Guillaume Drion$^{2,1,\dag}$, \& Rodolphe Sepulchre$^{1}$\\
\small{$^1$Department of Electrical Engineering and Computer Science, University of Li\`ege, Li\`ege, Belgium.}\\
\small{$^2$Laboratory of Pharmacology and GIGA Neurosciences, University of Li\`ege, Li\`ege, Belgium.}\\
\small{$^\dag$These authors contributed equally to this work}}
\date{}

\begin{document}

\maketitle

\begin{abstract}
The paper studies the excitability properties of a generalized FitzHugh-Nagumo model. The model differs from the purely competitive FitzHugh-Nagumo model in that it accounts for the effect of cooperative gating variables such as activation of calcium currents. Excitability is explored by unfolding a pitchfork bifurcation that is shown to organize five different types of excitability. In addition to the three classical types of neuronal excitability, two novel types are described and distinctly associated to the presence of cooperative variables.
\end{abstract}

\section{Introduction}

At the root of neuronal signaling, excitability is a dynamical property shared by all neurons, but its electrophysiological signature largely differs across neurons and experimental conditions. In the early days of experimental neurophysiology, Hodgkin \cite{hodgkin1948local} identified three distinct types of excitability (nowadays called Type I, II, and III) by stimulating crustacean nerves with constant current stimuli. The three types of excitability have long been associated to three distinct mathematical signatures in conductance-based models. They all can be described in planar models of the FitzHugh \cite{fitzhugh61} type, which can be rigorously associated to the mathematical reduction of high-dimensional models (see for instance the planar reduction of Hodgkin-Huxley model in \cite{rinzel1985excitation} and the excitability analysis in \cite{Rinzel:1989:ANE:94605.94613}). Reduced models have proven central to the understanding of excitability and closely related mechanisms such as bursting. However, understanding excitability in detailed conductance-based models remains a challenge, especially for neurons that exhibit transition between distinctively different firing types depending on environmental conditions. The present paper proposes a generalization of FitzHugh-Nagumo model that provides novel insights in the simple classification of excitability types.

The proposed model is a ``mirrored'' version of FitzHugh-Nagumo model, motivated by the mathematical reduction of conductance-based models including calcium channels. A central observation in \cite{DRFRSESE_PLoS_sub} is that the cooperative nature of calcium channel activation makes their contribution to excitability fundamentally different from competitive gating variables, such as sodium inactivation or potassium activation. This chief difference is responsible for an alteration of the phase portrait that cannot be reproduced in FitzHugh-Nagumo model and that calls for a generalized model that motivates the present paper.

We construct a highly degenerate pitchfork bifurcation (co-dimension 3) that is shown to organize excitability in five different types. The three first types correspond to the types of excitability extensively studied in the literature. They are all competitive, in the sense that they only involve a region of the phase plane where our model is purely competitive. In addition, the model reveals two new types of excitability (Type IV and V) that match the distinct electrophysiological signatures of conductance-based models of high density calcium channels. We prove that these two new types of excitability cannot be observed in a purely competitive model, such as FitzHugh-Nagumo model.

Global phase portraits of the proposed model are studied using singular perturbations theory, exploiting the timescale separation observed in all physiological recordings. An important result of the analysis is that Type IV and V excitable models exhibit a bistable range that persists in the singular limit of the model. This, in sharp contrast to the Type I, II, and III. The result suggests the potential importance of Type IV and V excitability in bursting mechanisms associated to cooperative ion channels.

The paper is organized as follows. The studied planar model, its geometrical properties, and the underlying physiological concepts are presented in Section \ref{SEC: proposed model and planar reduction}. In Section \ref{SEC: cod 3 bif}, we construct the pitchfork bifurcation organizing the model and unfold it into the five types of excitability. Types I, II, and III are briefly reviewed in Section \ref{SEC: known excitability types}. Types IV and V are defined and characterized via phase plane and numerical bifurcation analysis in Section \ref{SEC: novel excitability types}. By relying on geometrical singular perturbations, we provide in Section \ref{SEC: Type IV and V phase portrait} a qualitative description of the phase plane of Type IV and V excitable models. Their peculiarities with respect to Type I, II, and III are particularly stressed. A summary and a discussion of our analysis are provided in Section \ref{SEC: discussion}.

\section{A mirrored FitzHugh-Nagumo model and its physiological interpretation}
\label{SEC: proposed model and planar reduction}

The paper studies the excitability properties of the planar model
\begin{IEEEeqnarray}{rCl}\label{EQ: mirrored FHN dynamics}
\dot V&=&V-\frac{V^3}{3}-n^2+I_{app}\IEEEyessubnumber\\
\dot n&=&\epsilon(n_{\infty}(V-V_0)+n_0-n)\IEEEyessubnumber
\end{IEEEeqnarray}
where $n_\infty(V)$ is the standard Boltzman activation function
\begin{equation}\label{EQ: sigmoid function}
n_{\infty}(V):=\frac{2}{1+e^{-5V}},
\end{equation}
The model is reminiscent of the popular FitzHugh-Nagumo model of neuronal excitability: equation (\ref{EQ: mirrored FHN dynamics}a) describes the fast dynamics of the membrane potential $V$, whereas (\ref{EQ: mirrored FHN dynamics}b) describes the slow dynamics of the ``recovery variable'' $n$ that aggregates the gating of various ionic channels.

The voltage dynamics (\ref{EQ: mirrored FHN dynamics}a) are identical to the FitzHugh-Nagumo model, except that the quadratic term $n^2$ replaces the linear term $n$. This sole modification is central to the result of the present paper. The resulting nullcline $\dot V=0$ ``mirrors'' along the $V$-axis the classical inverse $N$-shaped nullcline of FitzHugh-Nagumo model, as illustrated in Figure \ref{FIG: trans organizing center}. The figure illustrates the phase portrait of the model in the singular limit $\epsilon=0$. The left and right phase-portraits are the unfolding of the transcritical bifurcation organizing center (see e.g. \cite[Pages 104-105]{SEYDEL94}) obtained for $I_{app}=I^\star:=\frac{2}{3}$. This particular value will help understanding excitability mechanisms at work in the situation $I_{app}> I^\star$, illustrated in the right figure.

\begin{figure}[h!]
\centering
\includegraphics[width=\textwidth]{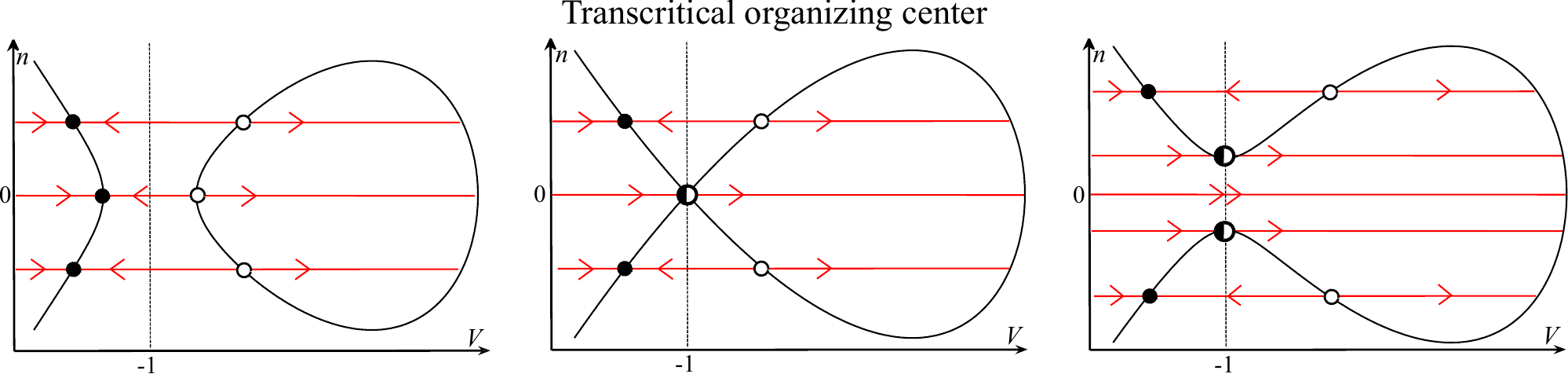}\\
\noindent{$I_{app}<I^\star$\hspace{0.25\textwidth}$I_{app}=I^\star$\hspace{0.25\textwidth}$I_{app}>I^\star$\hspace{0.3\textwidth}}
\vspace{-6mm}
\caption{$V$-nullclines of (\ref{EQ: mirrored FHN dynamics}) for different values of $I_{app}$ and $\epsilon=0$. Stable fixed points are depicted as filled circles, unstable as circles, and bifurcations as half filled circles}\label{FIG: trans organizing center}
\end{figure}

The recovery dynamics (\ref{EQ: mirrored FHN dynamics}b) exhibit the familiar firts-order relaxation of ionic current to the static sigmoid curve illustrated in Figure \ref{FIG: n relax curve and inv region}. For quantitative purposes, the numerator in the right hand side of (\ref{EQ: sigmoid function}) can be picked larger or equal than 2 without changing the underlying qualitative analysis. The parameters $(V_0,n_0)$ locate the relative position of the nullclines in the phase portrait. In particular, the region
\begin{equation}\label{EQ: invariant physiological strip}
S_{n_0}:=\{(V,n)\in\RR^2:\ n\in(n_0,\ n_0+2)\}
\end{equation}
is attractive and invariant for the dynamics (\ref{EQ: mirrored FHN dynamics}). The parameter $n_0$ slides up and down the ``physiological window'' of the recovery variable, whereas, $V_0$ is the half-activation voltage. The voltage $V^\star<V_0$ is defined as the voltage at which $n_{\infty}(V-V_0)$ has unitary slope. We adopt the conventional notation $n$ for the recovery variable but will allow $n_0<0$, which makes the range of $n$ include negative values. This is purely for mathematical convenience and should not confuse the reader used to the physiological interpretation of $n$ as a gating variable with range $[0,1]$.
\begin{figure}[h!]
\centering
\includegraphics[width=0.66\textwidth]{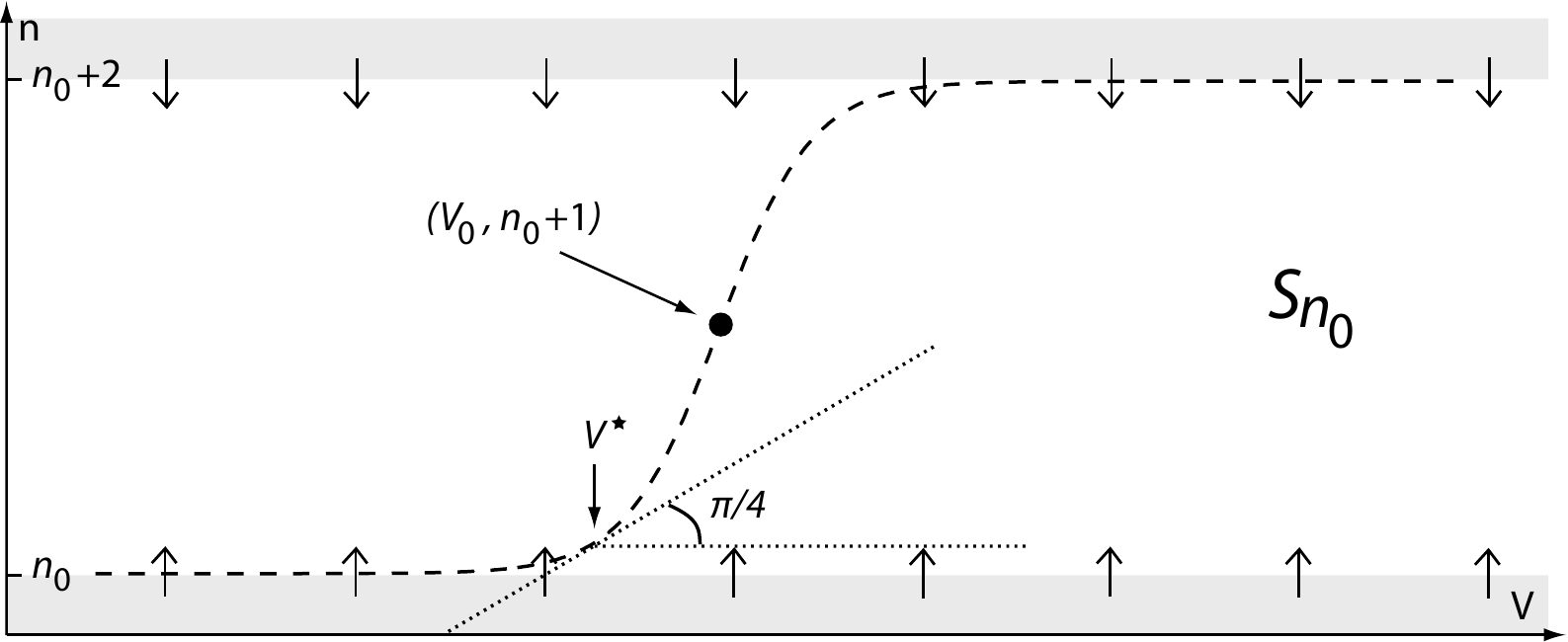}
\caption{Dependence of the $n$-nullclines of (\ref{EQ: mirrored FHN dynamics}) on the parameters $n_0$ and $V_0$. The location of the nullcline in the phase plane determines an attractive invariant region $S_{n_0}$. At the voltage $V^\star$, the nullcline has unitary slope.}\label{FIG: n relax curve and inv region}
\end{figure}



\subsection*{Competitive and cooperative excitability}

In the terminology of \cite{smith2008monotone}, the variables $V$ and $n$ of model (\ref{EQ: mirrored FHN dynamics}) are competitive in the half plane $n>0$, that is the model satisfies
\begin{equation*}
\frac{\partial\dot V}{\partial n}\frac{\partial\dot n}{\partial V}<0,
\end{equation*}
whereas they are cooperative in the half plane $n<0$, that is
\begin{equation*}
\frac{\partial\dot V}{\partial n}\frac{\partial\dot n}{\partial V}>0.
\end{equation*}
A consequence of that observation is that model (\ref{EQ: mirrored FHN dynamics}) is purely competitive when $n_0>0$. The corresponding phase portrait is illustrated in Figure \ref{FIG: n0 role and physio region}A. It is reminiscent of FitzHugh-Nagumo model and will recover known types of excitability. In contrast, the model is neither competitive nor cooperative when $n_0<0$. The corresponding phase portrait (Figure \ref{FIG: n0 role and physio region}B) is distinctly different from FitzHugh-Nagumo model and it will lead to the novel types of excitability studied in this paper. The role of the proposed mirrored FitzHugh-Nagumo model is to study the transition from a purely competitive model to a model that is neither cooperative nor competitive through a single parameter $n_0$. 

\begin{figure}
\centering
\includegraphics[width=0.9\textwidth]{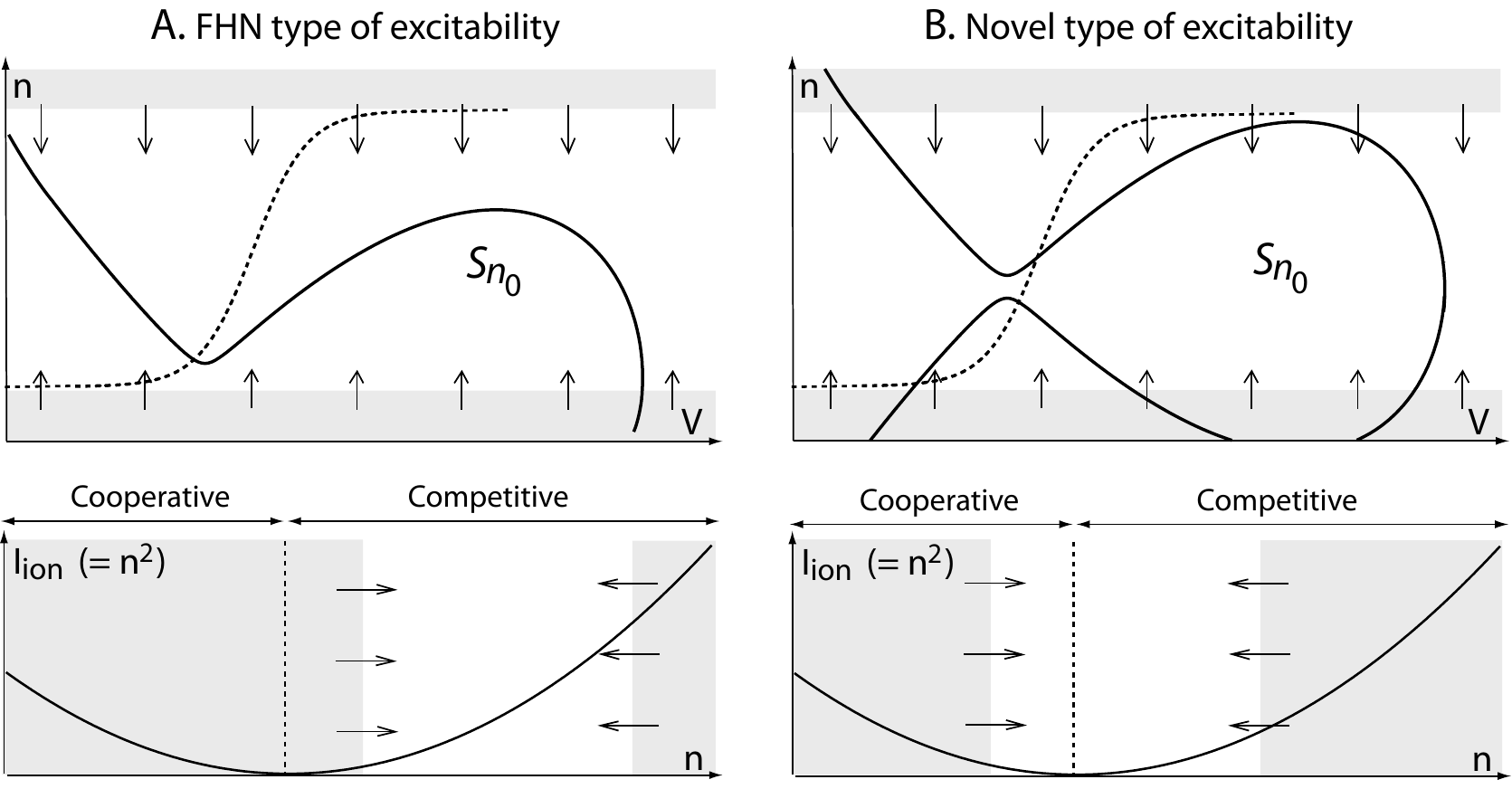}
\caption{Phase portrait and outward ionic current $n^2$ for different positions of the $n$-nullcline and the associated invariant region $S_{n_0}$. A) When $S_{n_0}$ is fully contained in the half-plane $n>0$ the phase portrait is reminiscent of original FitzHugh-Nagumo model and the outward ionic current is monotone increasing. The recovery variable has a purely competitive role. B) When $S_{n_0}$ extends to the half-plane $n<0$, the phase portrait exhibits new characteristics. The outward ionic current is not monotone, corresponding to two antisynergistic (competitive and cooperative) roles of the recovery variable.}\label{FIG: n0 role and physio region}
\end{figure}

\subsection*{The physiology behind competitive and cooperative behaviors}

Competitive and cooperative behaviors model different types of (in)activation gating variables in conductance-based models. When the recovery variable is purely competitive (Figure \ref{FIG: n0 role and physio region}A), it models the activation (resp. inactivation) of an outward (resp. inward) ionic current: a positive variation of $V$ induces a positive variation of $n$ and thus an \emph{increase} of the total outward ionic current, {\it i.e.} $n^2$, which is monotone increasing in $S_{n_0}$. In contrast, when the recovery variable becomes cooperative (Figure \ref{FIG: n0 role and physio region}B), its role is reversed: it models the activation (resp. inactivation) of an inward (resp. outward) ionic current: a positive variation of $V$ induces a positive variation of $n$ and thus a \emph{decrease} of the total outward ionic current, which is now monotone decreasing.

The seminal model of Hodgkin-Huxley only includes competitive slow gating variables: inactivation of sodium current and activation of potassium current. That is why the classical reduction of the Hodgkin-Huxley model leads to a FitzHugh-Nagumo type of phase portrait (Figure \ref{FIG: n0 role and physio region}A). However, conductance-based models often include cooperative gating variables. An example of the latter is the activation of calcium currents included in many bursting conductance-based models (e.g. R15 neuron of Aplysia's abdominal ganglion \cite{plant1976mathematical}, thalamo-cortical relay and reticular neurons \cite{McCormick92a,destexhe1996vivo}, CA3 hippocampal pyramidal neuron \cite{Traub01081991}). Adding a calcium current in the Hodgkin-Huxley model is one natural way to obtain a reduced phase portrait as in Figure \ref{FIG: n0 role and physio region}B. This observation, firstly presented in \cite{DRFRSESE_PLoS_sub} and reproduced in Figure \ref{FIG: trans in HH}, motivated the present study.

\begin{figure}
\centering
\includegraphics[width=0.9\textwidth]{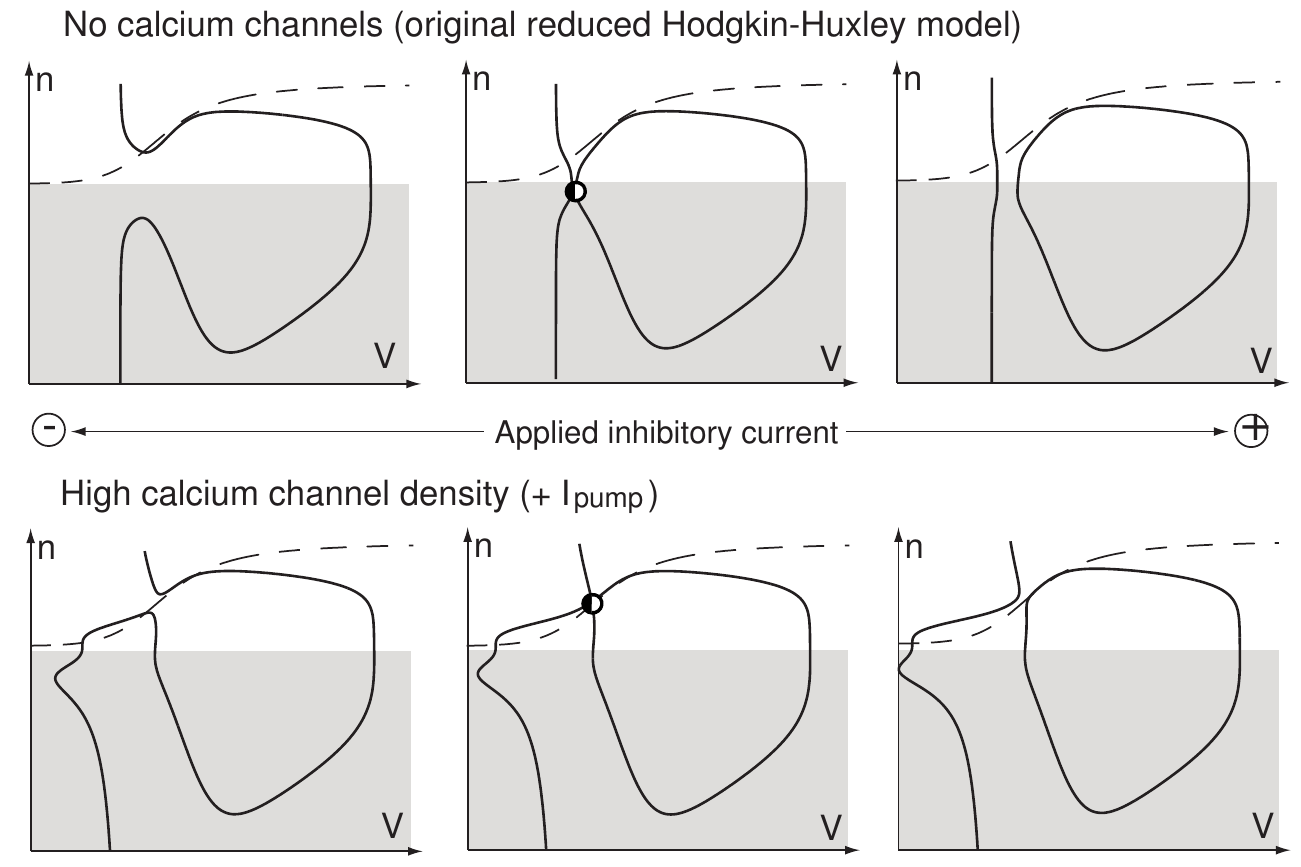}
\caption{In original reduced Hodgkin-Huxley model a transcritical singularity exists in the non-physiological region of the phase plane. The addition of a calcium current makes this bifurcation physiological.}\label{FIG: trans in HH}
\end{figure}

\section{A pitchfork bifurcation organizes different excitability types}
\label{SEC: cod 3 bif}

The model (\ref{EQ: mirrored FHN dynamics}) has three free geometrical parameters ($I_{app},n_0,V_{0}$). The parameter $n_0$ is an additional parameter with respect to FitzHugh-Nagumo dynamics. The three parameters can be adjusted to create a codimension three bifurcation that will provide an organizing center for excitability.

The degenerate bifurcation is illustrated in Figure \ref{FIG: bif geom constr} and is constructed as follows:
\begin{enumerate}
\item The applied current is fixed at $I_{app}=I^\star$, imposing the existence of the singularly perturbed transcritical bifurcation at the $V$-nullcline self-intersection (cf. Figure \ref{FIG: trans organizing center} center).
\item We fix $n_0=n_0^\star(V_0):=-n_\infty(-1-V_0)$ to force a nullcline intersection at the transcritical singularity at $(-1,0)$.
\item We fix $V^\star=-1$, or, equivalently, $V_0=V_0^\star:=-1+\frac{1}{5}\log\left(-6+\sqrt{35}\right)$, so that the $n$-nullcline is tangent to the $V$-nullcline at the intersection.
\begin{figure}[h]
\center
\includegraphics[width=0.4\textwidth]{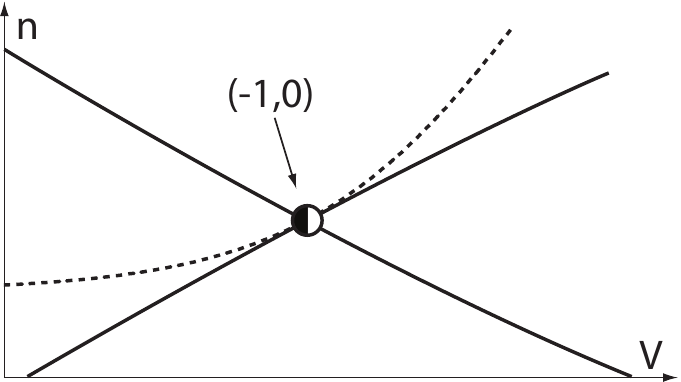}
\caption{Phase portrait of (\ref{EQ: mirrored FHN dynamics}) for $I_{app}=I^\star$, $n_0=n_0^\star(V_0)$, and $V^\star=-1$.}\label{FIG: bif geom constr}
\end{figure}
\end{enumerate}

A local normal form of the model (\ref{EQ: mirrored FHN dynamics}) at this degenerate bifurcation is provided by the following lemma.
\begin{lemma}
There exists an affine change of coordinates that transforms (\ref{EQ: mirrored FHN dynamics}) into
\begin{IEEEeqnarray}{rCl}\label{EQ: pitch normal form full}
\dot v&=&\frac{v^2(3-v)}{3}-\Big(k(V_0)(v-u)+\delta_0\Big)^2+I_{app}-I^\star\IEEEyessubnumber\\
\dot u&=&-\epsilon u+\mathcal O(v^2,vu,u^2)),\IEEEyessubnumber
\end{IEEEeqnarray}
where $k(V_0):=\frac{d n_{\infty}}{dV}(-1-V_0)$ is the slope of the activation function $n_{\infty}(V-V_0)$ at $V=-1$, and $\delta_0=n_0+n_\infty(-1-V_0)$.
\end{lemma}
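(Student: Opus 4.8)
The plan is to exhibit the affine map explicitly, verify the $\dot v$-equation by a one-line polynomial identity, and derive the $\dot u$-equation from a single Taylor expansion of $n_\infty$; the only genuinely delicate point is the bookkeeping of the remainder in the slow component.

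First I would set
\[
v:=V+1,\qquad u:=v-\frac{n-\delta_0}{k(V_0)}\quad\Longleftrightarrow\quad n=k(V_0)(v-u)+\delta_0 .
\]
This map is affine, and it is invertible because $k(V_0)=n_\infty'(-1-V_0)>0$ for every $V_0$ (the Boltzmann function $n_\infty$ is strictly increasing). Geometrically, $v$ recenters the self-intersection of the $V$-nullcline at the origin, while $u$ records, up to the scaling $1/k(V_0)$, the offset of $n$ from the tangent line to the $n$-nullcline at that point; this is why $u$ is the natural slow coordinate.

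For the first component, $\dot v=\dot V$ since $v$ does not involve $n$. Substituting $V=v-1$ and expanding the cubic gives the identity $V-\tfrac{V^3}{3}=-\tfrac23+\tfrac{v^2(3-v)}{3}$, so that, using $I^\star=\tfrac23$,
\[
\dot v=\frac{v^2(3-v)}{3}-n^2+(I_{app}-I^\star)=\frac{v^2(3-v)}{3}-\bigl(k(V_0)(v-u)+\delta_0\bigr)^2+I_{app}-I^\star,
\]
which is (\ref{EQ: pitch normal form full}a) exactly. Incidentally, this is precisely the computation that singles out the construction choices $I_{app}=I^\star$ and base point $V=-1$: these are the values that kill the constant term and make the two branches $n=\pm v\sqrt{1-v/3}$ of the fast nullcline cross transversally at the origin.

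For the second component I would compute $\dot u=\dot v-\dot n/k(V_0)$. Writing $\dot n=\epsilon\bigl(n_\infty(v-1-V_0)+n_0-n\bigr)$ and Taylor-expanding $n_\infty$ about the argument $-1-V_0$ yields $n_\infty(v-1-V_0)=n_\infty(-1-V_0)+k(V_0)\,v+g(v)$ with $g(v)=\mathcal O(v^2)$ (it vanishes to second order at $v=0$). Since $\delta_0=n_0+n_\infty(-1-V_0)$ and $n=k(V_0)(v-u)+\delta_0$, the constant and the linear-in-$v$ contributions cancel, leaving $\dot n=\epsilon\bigl(k(V_0)\,u+g(v)\bigr)$, hence $\dot u=-\epsilon u+\dot v-\tfrac{\epsilon}{k(V_0)}g(v)$. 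The residual terms, namely $\tfrac{v^2(3-v)}{3}-(k(V_0)(v-u)+\delta_0)^2$, the constant $I_{app}-I^\star$, and $\tfrac{\epsilon}{k(V_0)}g(v)$, are then absorbed into the symbol $\mathcal O(v^2,vu,u^2)$: the genuinely nonlinear pieces are manifestly of that order, and the parameter-dependent constant and linear pieces ($I_{app}-I^\star$, $\delta_0^2$, $2k(V_0)\delta_0(v-u)$) are of higher order once the unfolding parameters $I_{app}-I^\star$ and $\delta_0$ are treated, as is standard for a local unfolding, as small quantities vanishing at the organizing center. This final bookkeeping is the only step that needs care; everything before it is either an exact identity or a one-term Taylor expansion.
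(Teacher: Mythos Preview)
Your proof is correct and follows essentially the same route as the paper: the paper performs the affine change in two stages (first $\tilde w=n-\delta_0$, $v=V+1$, then $u=v-\tilde w/k(V_0)$), which composes to exactly your single map $v=V+1$, $u=v-(n-\delta_0)/k(V_0)$, and the computations for $\dot v$ and $\dot u$ are identical. You are in fact more explicit than the paper about the one delicate point---that the constant and linear contributions $I_{app}-I^\star$, $\delta_0^2$, and $2k(V_0)\delta_0(v-u)$ appearing in $\dot v$ (and hence in $\dot u$) are absorbed into $\mathcal O(v^2,vu,u^2)$ only because the unfolding parameters $\delta_0$ and $I_{app}-I^\star$ vanish at the organizing center---which the paper's proof leaves implicit.
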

\begin{proof}
The affine change of coordinates
\begin{IEEEeqnarray*}{rCl}
\tilde w&:=&n-\delta_0,\\
v&=&V+1,
\end{IEEEeqnarray*}
transforms (\ref{EQ: mirrored FHN dynamics}) into
\begin{IEEEeqnarray}{rCl}\label{EQ: pitch norm form proof 1}
\dot v&=&\frac{v^2(3-v)}{3}-(\tilde w+\delta_0)^2+I_{app}-I^\star\nonumber\\
\dot{\tilde w}&=&\epsilon(n_{\infty}(v-1-V_0)-n_{\infty}(-1-V_0)-\tilde w),
\end{IEEEeqnarray}
where $\delta_0$ is defined as in the statement of the lemma. Because $\dot{\tilde w}|_{v=\tilde w=0}=0$, we extract the linear term in (\ref{EQ: pitch norm form proof 1}) and write
\begin{IEEEeqnarray}{rCl}\label{EQ: pitchfork main taylor}
\dot v&=&\frac{v^2(3-v)}{3}-(\tilde w+\delta_0)^2+I_{app}-I^\star\IEEEyessubnumber\\
\dot{\tilde w}&=&\epsilon(k(V_0)v-\tilde w+\mathcal O(v^2,v\tilde w,\tilde w^2)),\IEEEyessubnumber
\end{IEEEeqnarray}
Finally, the linear transformation $u=v-\frac{\tilde w}{k(\bar V_0)}$ transforms (\ref{EQ: pitchfork main taylor}) into
\begin{IEEEeqnarray*}{rCl}
\dot v&=&\frac{v^2(3-v)}{3}-(k(V_0)(v-u)+\delta_0)^2+I_{app}-I^\star\\
\dot u&=&-\epsilon u+\mathcal O(v^2,vu,u^2)),
\end{IEEEeqnarray*}
which proves the lemma.
\end{proof}

The dynamics (\ref{EQ: pitch normal form full}) have an exponentially attractive center manifold that is tangent at $v=u=0$ to center space $\{(v,u):\ u=0\}$. Ignoring higher order terms, the dynamics on the center manifold is given by
\begin{equation}\label{EQ: pitch normal form center}
\dot v=\big(1-k(V_0)^2\big)v^2-\frac{v^3}{3}-2k(V_0)v\delta_0-\delta_0^2+I_{app}-I^\star.
\end{equation}

\subsubsection*{Codimension 2 transcritical bifurcation for $I_{app}=I^\star$, $n_0=n_0^\star(V_0)$, $V_0\neq V_0^\star$}

Fixing $I_{app}=I^\star$ (Item 1.) and $\delta_0=0$ (Item 2.), but $V_0\neq V_0^\star$, we obtain from (\ref{EQ: pitch normal form center}) the following relationships:
\begin{IEEEeqnarray}{lll}\label{EQ: trans def conds}
\dot v\left|_{\substack{v=\delta=0\\
I_{app}=I^\star\\
V_0\neq V_0^\star}}\right.=\frac{\partial\dot v}{\partial v}\left|_{\substack{v=\delta=0\\
I_{app}=I^\star\\
V_0\neq V_0^\star}}\right.=\frac{\partial\dot v}{\partial \delta_0}\left|_{\substack{v=\delta=0\\
I_{app}=I^\star\\
V_0\neq V_0^\star}}\right.=0,\IEEEyessubnumber\\
\frac{\partial^2\dot v}{\partial v^2}\left|_{\substack{v=\delta=0\\
I_{app}=I^\star\\
V_0\neq V_0^\star}}\right.=2\big(1-k(V_0)^2\big)\neq0,\IEEEyessubnumber\\
\frac{\partial^2\dot v}{\partial v\partial\delta_0}\left|_{\substack{v=\delta=0\\
I_{app}=I^\star\\
V_0\neq V_0^\star}}\right.=2k(V_0)\neq0,\IEEEyessubnumber
\end{IEEEeqnarray}
where (\ref{EQ: trans def conds}b) comes from the fact that, since $V_0\neq V_0^\star$, $k(V_0)\neq 1$, whereas (\ref{EQ: trans def conds}c) comes from the fact that $k(V_0)\neq0$ for all $V_0\in\RR$. Relation (\ref{EQ: trans def conds}) are the defining conditions of a transcritical singularity (see e.g. \cite[Page 367]{SEYDEL94}). For any value of the perturbation parameter $\delta_0\neq0$, there are two fixed points that exchange their stability at the bifurcation for $\delta_0=0$. Figure \ref{FIG: trans and pitch} {\bf A,C} illustrates this result.
\begin{figure}[h]
\center
\includegraphics[width=0.8\textwidth]{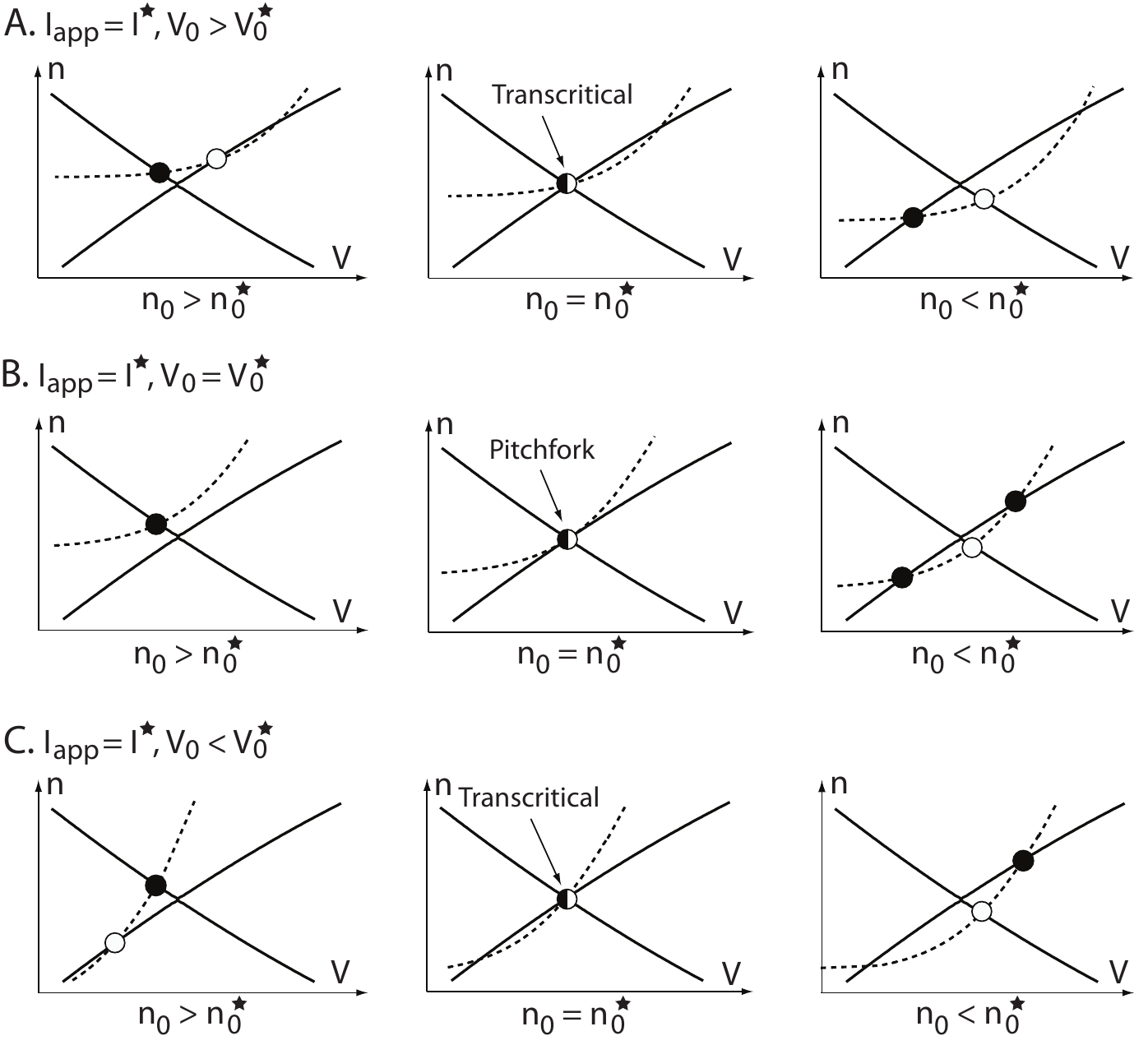}
\caption{Phase portrait of (\ref{EQ: mirrored FHN dynamics}) for $I_{app}=I^\star$, different values of $V_0$, and with $n_0$ as the bifurcation parameter. {\bf A,C}: When $V_0<V_0^\star$ or $V_0>V_0^\star$ as $n_0$ decreases below $n_0^\star(V_0)$ a stable and an unstable fixed points exchange their stability in a codimension 2 transcritical singularity. {\bf B:} When $V_0=V_0^\star$ the bifurcation degenerates in codimension 3 pitchfork at which a stable fixed point splits in two stable fixed points (outer) and an unstable fixed point (inner).}\label{FIG: trans and pitch}
\end{figure}
Note that we have to fix exactly two parameters ({\it i.e.} $I_{app}$ and $\delta_0$) to obtain (\ref{EQ: trans def conds}), which confirms that bifurcation described by (\ref{EQ: trans def conds}) has codimension 2.

\subsubsection*{Codimension 3 pitchfork bifurcation for $I_{app}=I^\star$, $n_0=n_0^\star(V_0)$, $V_0= V_0^\star$}

Adding the condition $V_0= V_0^\star$ (Item 3.), we obtain the extra degeneracy condition
$$\frac{\partial^2\dot v}{\partial v^2}\left|_{\substack{v=\delta=0\\
I_{app}=I^\star\\
V_0\neq V_0^\star}}\right.=2\big(1-k(V_0)^2\big)=0, $$
and the associated bifurcation is in this case a codimension 3 pitchfork bifurcation (see e.g. \cite[Page 367]{SEYDEL94}). In this case, positive perturbation of $\delta_0$ leads to a unique stable fixed point, which split in three fixed points, the outer stable and the central unstable, at the bifurcation (see Figure \ref{FIG: trans and pitch} {\bf B}).

\subsection*{Unfolding in the plane $I_{app}=I^\star$}

The parameter chart in Figure \ref{FIG: TC PC} unfolds the pitchfork bifurcation in the plane $(V_0,n_0)$ for the fixed critical value $I^\star$. This bifurcation analysis reveals five qualitatively distinct regions denoted by I, II, III, IV, and V. The transition from Region I to Region II is through a saddle-node bifurcation at which the $n$-nullcline is tangent to the $V$-nullcline. See how the top right phase portrait is continuously deformed to the top left phase portrait in Figure \ref{FIG: TC PC}. The transition from Region I to Region IV is through a transcritical bifurcation at which a saddle and a node exchange their stability. See how the top right phase portrait is continuously deformed to the bottom right phase portrait in Figure \ref{FIG: TC PC}. A similar transition occur from Region IV to Region V. See how the bottom right phase portrait is continuously deformed to the bottom left phase portrait in Figure \ref{FIG: TC PC}. Finally, the transition from Region V to Region II is through a saddle-node bifurcation. See how the bottom left phase portrait is continuously deformed to the top left phase portrait in Figure \ref{FIG: TC PC}.

Region III in Figure \ref{FIG: TC PC} is illustrated for future reference in the next section, but it should not be differentiated from Region II in the plane $I=I^\star$, {\it i.e.} there is no bifurcation associated to the transition from Region II to Region III.

\begin{figure}
\centering
\includegraphics[width=0.8\textwidth]{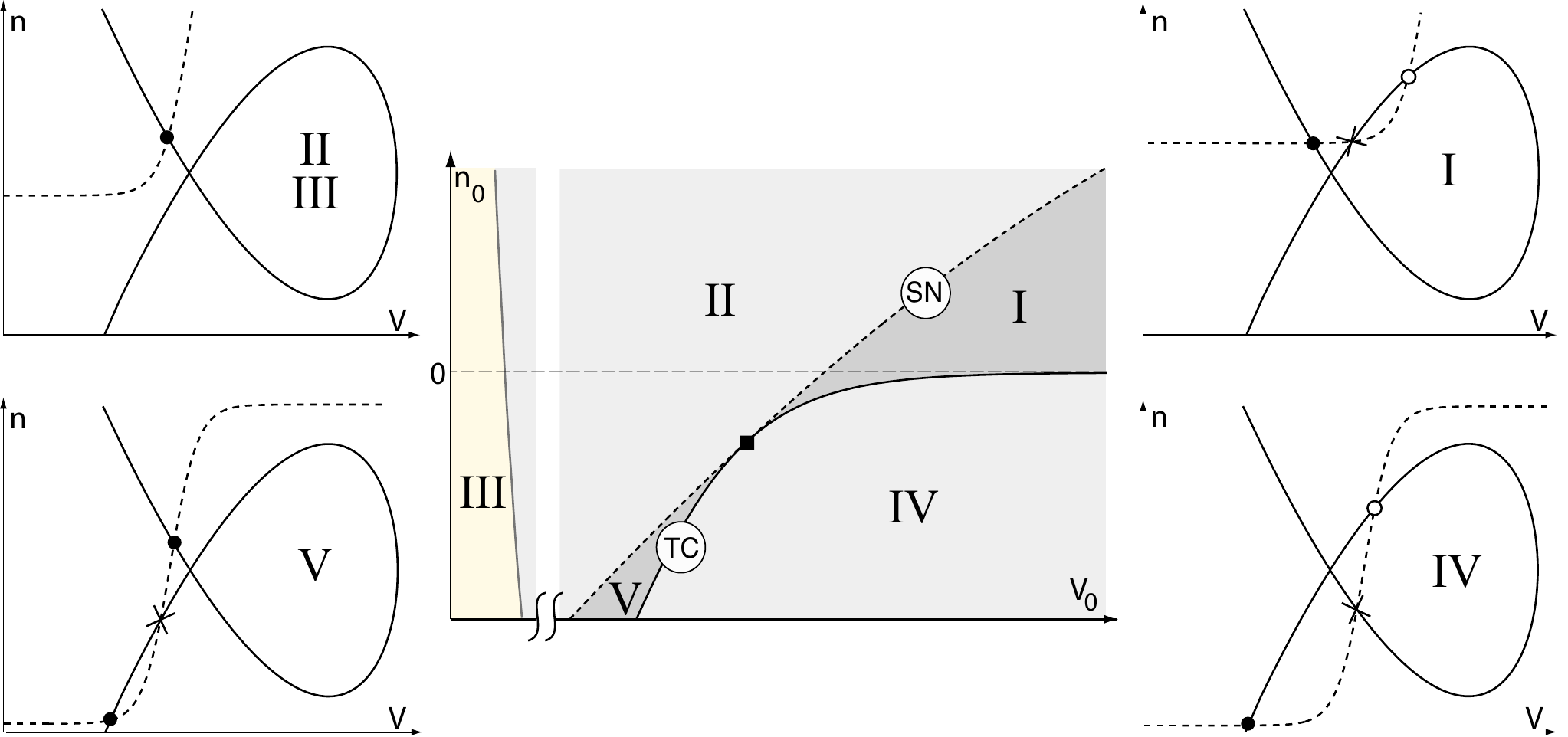}
\caption{Unfolding of the degenerate pitchfork bifurcation in the plane $I_{app}=I^\star$. Stable fixed points are depicted as filled circles, whereas unstable as circles. Saddle points are depicted as crosses. $\blacksquare$: pitchfork bifurcation. TC: transcritical bifurcation. SN: saddle-node bifurcation.}\label{FIG: TC PC}
\end{figure}

\subsection*{The pitchfork bifurcation organizes excitability}

The relevance of the unfolding in Figure \ref{FIG: TC PC} for excitability is that the different regions correspond to different types of excitability for $I_{app}>I^\star$. Regions I, II, and III correspond to Types I, II, and III excitability identified in the early work of Hodgkin \cite{hodgkin1948local} and extensively studied in the literature since then. Those are the only types of excitability that can be associated to purely competitive models ({\it i.e. $n_0>0$}) and they all have been studied in FitzHugh-Nagumo type phase portraits. They are briefly reviewed in Section \ref{SEC: known excitability types}. In contrast, Region IV and V correspond to new types of excitability that require the co-existence of competitive and cooperative ionic currents. They are studied in Sections \ref{SEC: novel excitability types}, \ref{SEC: Type IV and V phase portrait}. 

Our analysis assumes a timescale separation $\epsilon\ll1$, reflecting the accepted strong separation between the \emph{fast} voltage dynamics and sodium activation kinetics and the remaining \emph{slow} gating kinetics. We focus on those bifurcations that persist in the singular limit $\epsilon\to0$.

Figure \ref{FIG: TC exc types} summarizes the different types of excitability studied in the next section and their main electrophysiological signatures.

\begin{figure}
\centering
\includegraphics[width=1.0\textwidth]{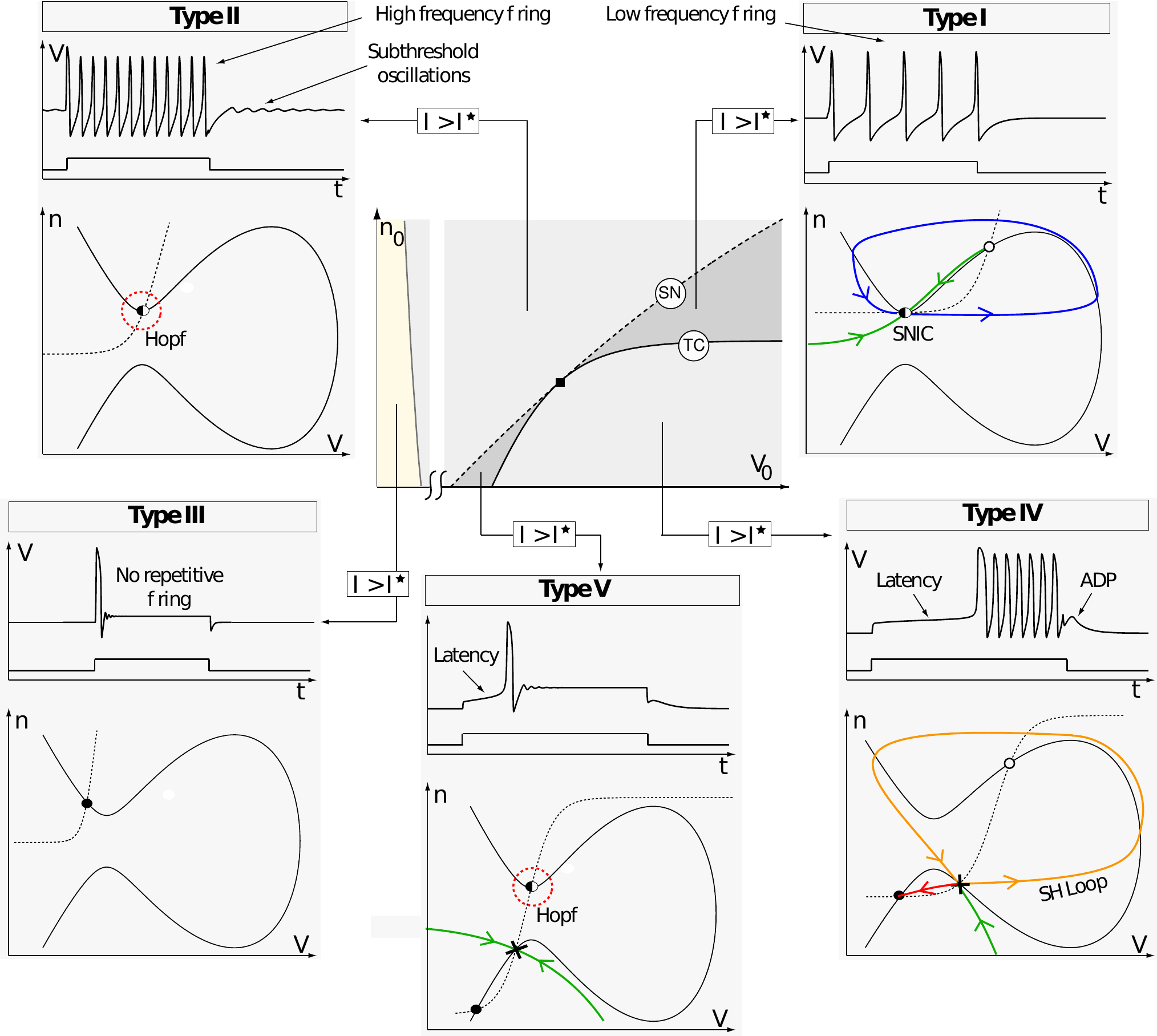}
\caption{Sketch of different types of excitable behaviors in the unfolding of the degenerate pitchfork bifurcation ($\blacksquare$ in the central parameter chart) for $I_{app}>I^\star$ and $\epsilon\ll1$. For each type, the typical voltage time course and the phase portrait are sketched. The abbreviation SNIC denotes the saddle-node on invariant circle bifurcation. Stable fixed points are depicted as filled circles, unstable as circles, saddles as cross, and bifurcations as half-filled circles. The stable manifold of saddle points is depicted in green. The center manifold of the SNIC bifurcation is depicted in blue. The saddle homoclinic loop in Type IV is depicted in orange.}
\label{FIG: TC exc types}
\end{figure}

\section{Three types of excitability in competitive models}
\label{SEC: known excitability types}

\subsection*{Type I (SNIC)}

Fixing the parameters $(V_0,n_0)$ in Region I of the parameter chart in Figure \ref{FIG: TC PC}, the node and the saddle approach each other as the applied current $I_{app}>I^\star$ increases and eventually collide in a saddle-node bifurcation at the critical value $I_{app}=I_{SNIC}$, as depicted in Figure \ref{FIG: TC exc types} (top right). Under the timescale separation assumption ({\it i.e.} $\epsilon\ll1$), the center manifold of the bifurcation forms a homoclinic loop, as sketched in the figure. This bifurcation is commonly identified as saddle-node on invariant circle (SNIC). As $I_{app}$ is further increased, the fixed point disappears and the system generates a periodic train of action potentials.\\

\noindent The excitability properties of model (\ref{EQ: mirrored FHN dynamics}) near a SNIC bifurcation are commonly referred to as Type~I excitability (see e.g. \cite[Section 3.4.4]{ERTE10}, \cite{Rinzel:1989:ANE:94605.94613}, and \cite[Section 7.1.3]{IZHIKEVICH2007} and references therein). The main properties associated to Type I excitability are as follows: 
\begin{itemize}
\item[] {\bf All-or-none spike.} The model has a well defined threshold ({\it i.e.} $I_{app}=I_{SNIC}$) to generate action potentials. Above the threshold, the amplitude of the action potentials does not depend on the stimulus intensity.
\item[] {\bf Low frequency spiking.} The frequency of the limit cycle decreases to zero as $I_{app}\searrow I_{SNIC}$. From a computational point of view, this property permits to encode the stimulus intensity in the oscillation frequency.
\end{itemize} 

Examples of neurons exhibiting Type I excitability include: thalamo-cortical neurons with inactivated T-type calcium current (depolarized steady-state) \cite[Figure 3]{zhan1999current}, isolated axons from Carcinus maenas \cite[Class 1]{hodgkin1948local}, regular spiking neurons in somatosensory cortex \cite{tateno2004threshold}, molluscan neurons \cite{sessley2002evidence}.

\subsection*{Type II (Hopf)}

In Region II of the parameter chart in Figure \ref{FIG: TC PC}, the nullclines intersect only once at a stable fixed point. As $I_{app}$ is increased, this fixed point looses stability in a Hopf bifurcation at $I_{app}=I_{Hopf}$, as illustrated in Figure \ref{FIG: TC exc types} (top left). Above this critical input current, the system possesses a stable limit cycle surrounding the unstable fixed point.\\

\noindent Excitability properties associated to a Hopf bifurcation are well known and define Type II excitability (see e.g. \cite[Section 3.4.4]{ERTE10}, \cite{Rinzel:1989:ANE:94605.94613}, and \cite[Section 7.1.3]{IZHIKEVICH2007} and references therein). Fundamental properties of Type II excitable systems include:
\begin{itemize}
\item[] {\bf No threshold.} When the bifurcation is supercritical, the amplitude of the limit cycle decreases to zero as $I_{app}\searrow I_{Hopf}$, which makes it hard to define a threshold for the generation of an action potential. Canard trajectories \cite{wechselberger2007canards} are sometime considered as ``soft'' threshold manifold between small and large amplitude action potential.
\item[] {\bf Subthreshold oscillations.} When the applied current is slightly below the bifurcation values ({\it i.e.} $I_{app}\lesssim I_{Hopf}$) the system trajectory relaxes to the fixed point with damped oscillations at the natural frequency of the Hopf bifurcation ({\it i.e.} the imaginary part of the eigenvalues at the bifurcation).
\item[] {\bf No low frequency firing.} The oscillation frequency is (almost) independent of the injected current and is equal to or larger than the natural frequency of the Hopf bifurcation.
\item[] {\bf Frequency preference.} Trains of small ($<I_{Hopf}$) amplitude inputs can induce spike if the intra stimulus frequency is resonant with the natural frequency of the Hopf bifurcation. This phenomenon is tightly linked to the presence of subthreshold oscillations and permits to detect the presence of resonant harmonics in the stimulus.
\item[] {\bf Post-inhibitory spike.} Transient negative current can induce an action potential in Type II excitable systems.
\end{itemize} 

Examples of neurons exhibiting Type II excitability include: isolated axons from Carcinus maenas \cite[Class 2]{hodgkin1948local}, fast spiking neurons in somatosensory cortex \cite{tateno2004threshold}, alpha moto-neurons \cite{Messina197657}.

\subsection*{Type III}

Type III excitability was only recently studied \cite{Gai01122009}. It can be thought as a less excitable variant of Type II excitability. In Region III of Figure \ref{FIG: TC PC}, the half-activation voltage $V_0$ is so negative that the stable focus never looses its stability as the applied current is increased. Nevertheless, the model is still excitable. For instance, as depicted in Figure \ref{FIG: TC exc types} (bottom left), a positive current step instantaneously shifts the stable fixed point upright and an originally resting trajectory is attracted toward the right branch of the $V$-nullcline before relaxing back to rest. On the contrary, if the applied current varies slowly no action potential is generated.\\

\noindent Particular neurocomputational properties of Type III excitability have recently been highlighted in \cite{Gai01122009}:
\begin{itemize}
\item[] {\bf Slope detection.} Because a brutal variation of the applied current is necessary to excite the model, Type III excitable neurons acts as slope detectors with a high temporal precision
\item[] {\bf Slope based stochastic resonance.} In the presence of noise, Type III excitable models are most sensitive to the stimulus slope and frequency, rather than to its amplitude. The associated stochastic resonance phenomenon (slope based stochastic resonance) exhibits distinctly different filtering properties with respect to the classical stochastic resonance in Type I/II excitable models.
\end{itemize}

Examples of neurons exhibiting Type III excitability include: squid giant axons (revised model) \cite{clay2008simple}, auditory brain stem \cite{Gai01122009}, isolated axons from Carcinus maenas \cite[Class 3]{hodgkin1948local}.

\section{Two novel types of excitability in non-competitive models}
\label{SEC: novel excitability types}

\subsection*{Type IV (singularly perturbed saddle-homoclinic)}

As illustrated in the bottom right phase portrait of Figure \ref{FIG: TC PC}, Type IV excitability is the first excitability type that involves the ``mirrored'' shape of the voltage nullcline in (\ref{EQ: mirrored FHN dynamics}): the stable node and the saddle lie on the lower cooperative branch of the $V$-nullcline. In particular, the hyperpolarized stable steady state of Type IV excitable models lies in the cooperative region of the phase portrait, {\it i.e.} where
\begin{equation*}
\frac{\partial\dot V}{\partial n}\frac{\partial\dot n}{\partial V}>0.
\end{equation*}
As a consequence, excitability properties of this phase portrait cannot be studied in FitzHugh-Nagumo like models.

Fixing the pair $(V_0,n_0)$ in Region IV of Figure \ref{FIG: TC PC}, we obtain the bifurcation diagram illustrated in Figure \ref{FIG: Type IV} together with the associated phase portraits. The stable node looses stability in a saddle-node bifurcation at the critical value $I_{app}=I_{SN}>I^\star$. For $I_{app}>I_{SN}$, the model possesses a stable limit cycle that attracts all solutions (but the unstable focus). The spiking limit cycle disappears in a (singularly perturbed) saddle-homoclinic bifurcation at $I_{app}=I_{SH}$. The stable node attracts all solutions (but those on the stable manifold of the saddle) for $I_{app}<I_{SH}$. Based on geometrical singular perturbations, we provide in Section \ref{SEC: Type IV and V phase portrait} a global phase portrait analysis of Type IV excitable systems.

\begin{figure}
\centering
\includegraphics[width=0.9\textwidth]{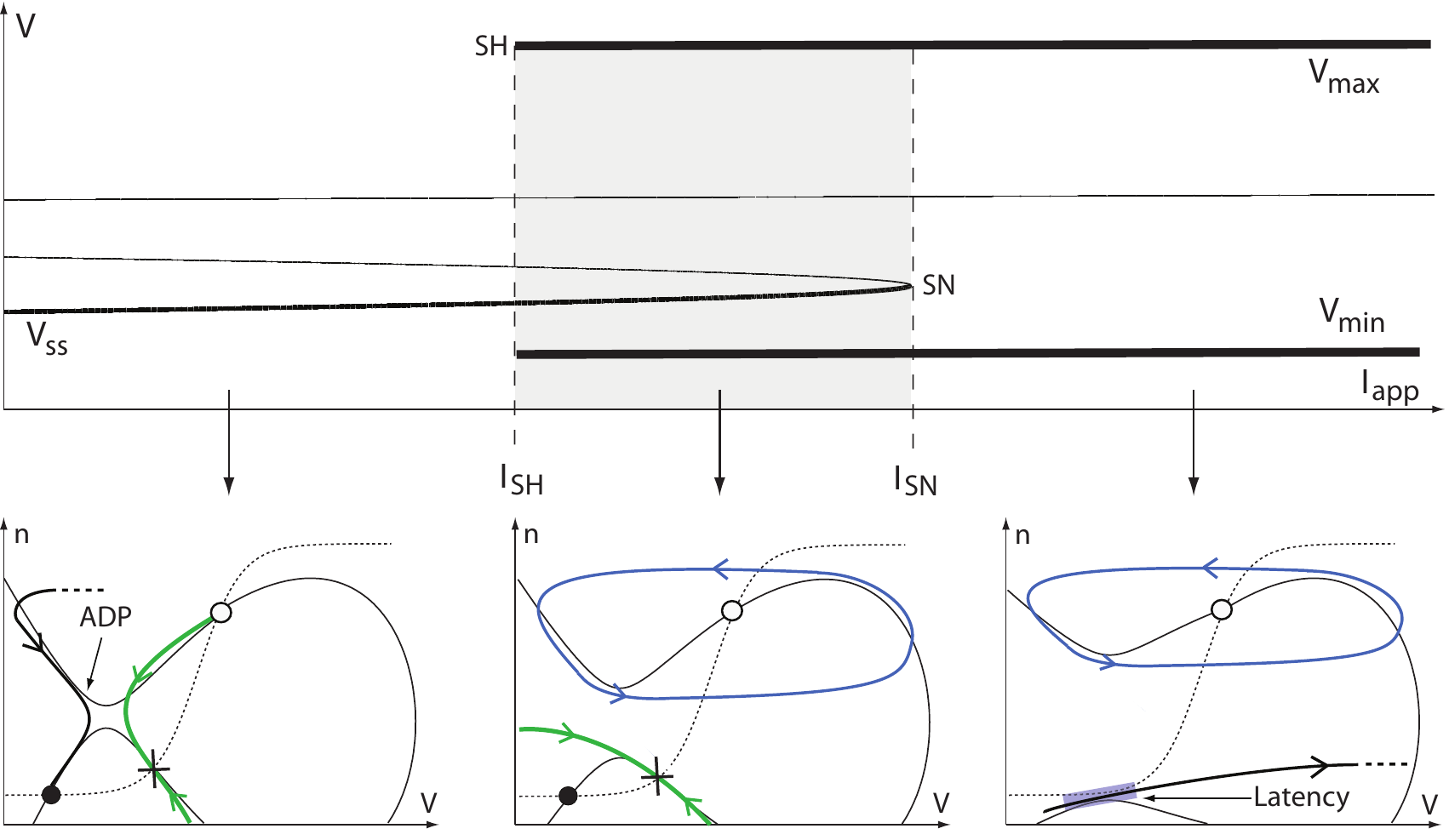}
\caption{Bifurcations in Types IV excitable systems. Top: bifurcation diagram. Stable fixed point are drawn as solid lines, unstable as thin lines, and stable limit cycles as think lines. SH denotes the saddle-homoclinic bifurcation, SN the saddle-node bifurcation. Bottom: phase portraits. The stable manifold of saddle points is depicted in green. Unstable fixed point are depicted as circles, saddles as crosses, and stable fixed points as filled circles. Limit cycle attractors are depicted in blue. Sample trajectories are depicted as black oriented lines.}\label{FIG: Type IV}
\end{figure}

\noindent The chief properties of Type IV excitable systems are:
\begin{itemize}
\item[] {\bf Bistability.} Type IV excitable models are bistable in the parameter range $I_{SH}<I_{app}<I_{SN}$: a limit cycle attractor coexists with a stable fixed point.
\item[] {\bf Spike-latency.} When the applied current is abruptly increased slightly above $I_{SN}$, the trajectory necessarily travels the narrow region between the nullclines. Since the vector field is small in that region, the first action potential is fired with a large latency.
\item[] {\bf After-depolarization potential.} After the limit cycle has disappeared in the homoclinic bifurcation, the trajectory converges to rest by following the attractive branches of the voltage nullcline, thus generating robust ADPs (see \cite{DRFRSESE_PLoS_sub}).
\end{itemize}
We stress that bistability, spike-latency, and ADPs are all direct consequences of the presence of a saddle point on the cooperative branch of the voltage nullcline: the basin of attraction of the stable node and the limit cycle are separated by the stable manifold of the saddle; spike-latency reveals the ``ghost'' of the center manifold of the saddle-node bifurcation; ADPs are generated along the hyperbolic invariant structure provided by the saddle stable and unstable manifolds (see Section \ref{SEC: Type IV and V phase portrait} below).

Examples of neurons exhibiting Type IV excitability include: subthalamic nucleus neurons \cite{Hallworth20082003}, thalamo-cortical reticular and relay neurons with deinactivated T-type calcium current (hyperpolarized state) \cite{huguenard1992novel,mccormick1997sleep}, dopaminergic neurons \cite{johnson1992burst,grace1984control}, superficial pyramidal neurons \cite{gray1996chattering}.

\subsection*{Type V (saddle-saddle)}

Type V excitability relates to Type IV as Type III does to Type II: similarly to Type IV, the hyperpolarized stable steady state of Type V excitable models lies in cooperative
region of the phase plane. The distinct feature of the bottom left phase portrait of Figure \ref{FIG: TC PC} is the co-existence of two stable fixed points, a ``down-state'' and an ``up-state''. The saddle stable manifold separates the two attractors. As $I_{app}$ is decreased below $I^\star$, the up-state eventually looses its stability in a saddle-node bifurcation at $I_{SN,up}$, leaving the down-state globally asymptotically stable. Similarly, as $I_{app}$ is increased above $I^\star$, the down-state eventually disappears in a saddle-node bifurcation at $I=I_{SN,down}$. But the up-state itself eventually looses its stability in a Hopf bifurcation at $I_{app}=I_{Hopf}$. The Hopf bifurcation can either take place beyond the  bistable range $[I_{SN,up},I_{SN,down}]$, a situation illustrated in Figure \ref{FIG: Type Va}, or it can take place within the bistable range $[I_{SN,up},I_{SN,down}]$, in which case, depending on $I_{app}$, the stable down-state coexists with either a limit cycle attractor or a stable fixed point, a situation illustrated in Figure \ref{FIG: Type Vb}.

\begin{figure}
\centering
\includegraphics[width=0.9\textwidth]{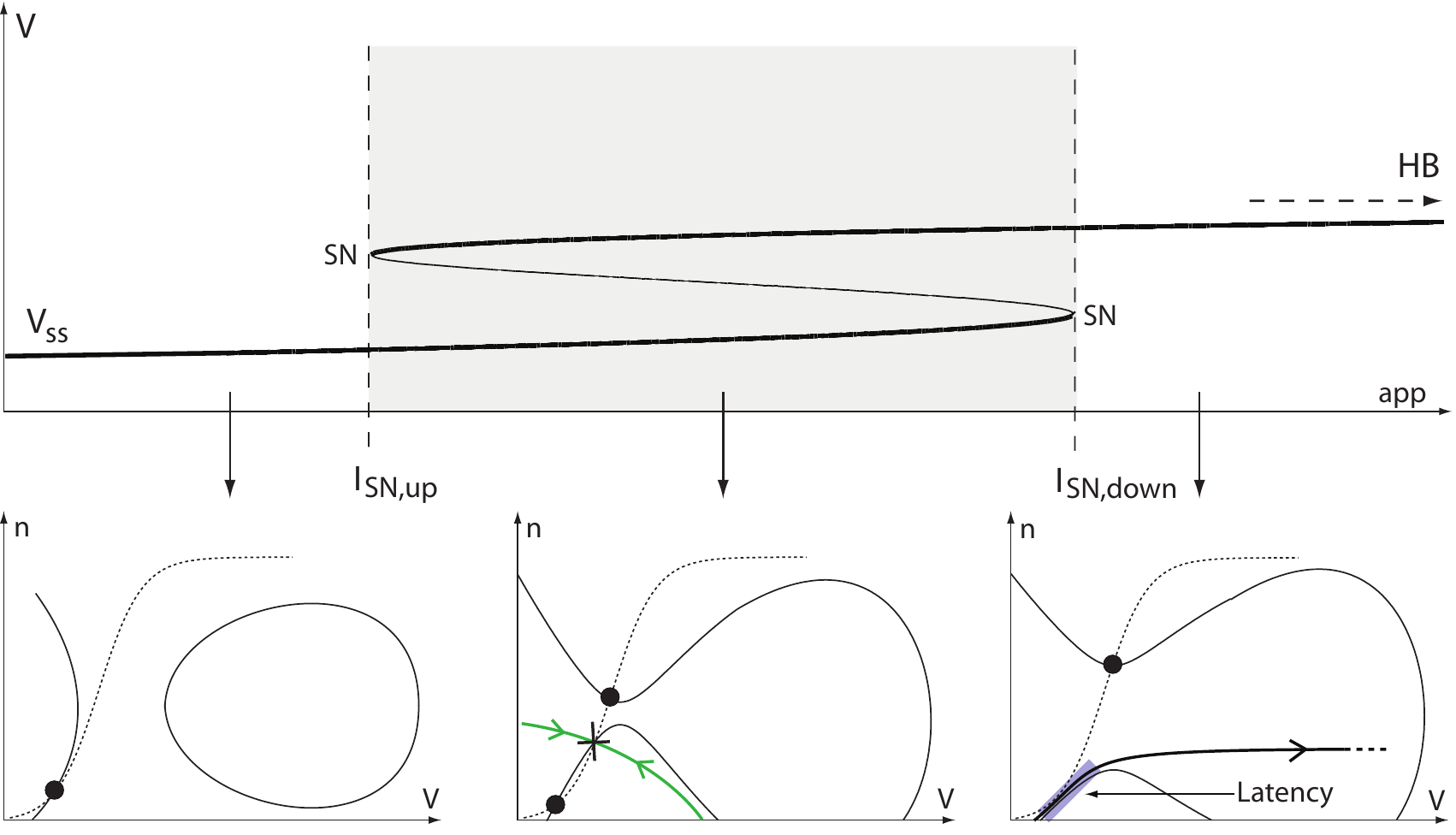}
\caption{Bifurcations in Types V excitable systems. Legend as in Figure \ref{FIG: Type IV}, except HB denoting the Hopf bifurcation.}\label{FIG: Type Va}
\end{figure}
\begin{figure}
\centering
\includegraphics[width=0.9\textwidth]{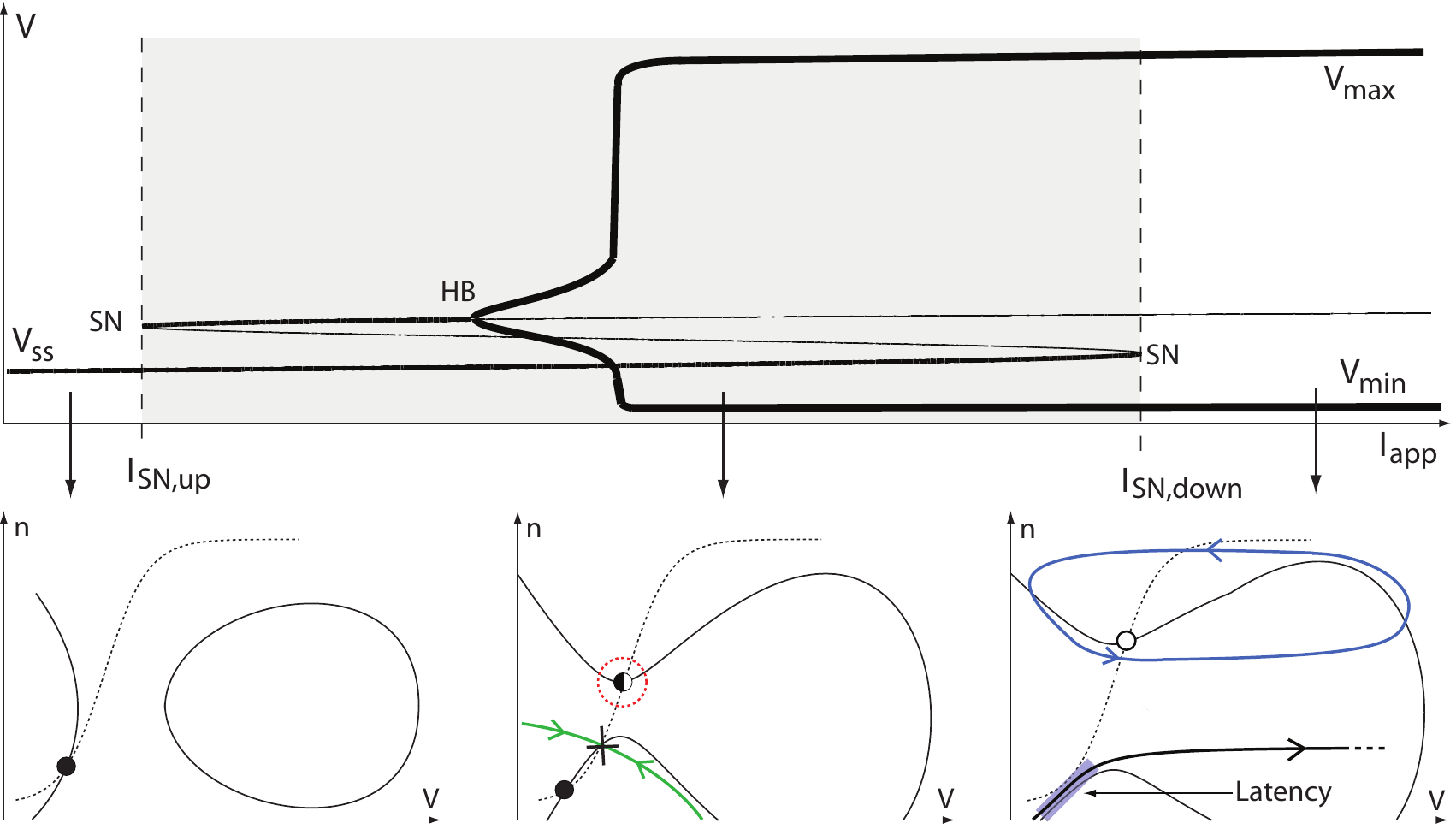}
\caption{Bifurcations in Types V excitable systems. Legend as in Figure \ref{FIG: Type IV}, except HB denoting the Hopf bifurcation. The same bifurcation is depicted in the phase portrait a half-filled circle.}\label{FIG: Type Vb}
\end{figure}

\noindent The main properties of Type V excitable models are similar to those of Type IV and can be summarized as follows:
\begin{itemize}
\item[] {\bf Bistability} Type V excitable models are bistable in the range $I_{SN,up}<I_{app}<I_{SN,down}$: a stable down-state coexists with an up-state attractor that can be either a stable fixed point or a stable limit cycle.
\item[] {\bf Spike latency.} Similarly to Type IV, the down-state looses stability in a saddle-node bifurcation on the lower (cooperative) branch of $V$-nullcline, leading to a long latency before the convergence to the up-state.
\item[] {\bf Plateau potentials.} The up-state having a higher voltage with respect to the down-state, the transition between the two gives rise to plateau potentials either with or without spikes.
\end{itemize}

Examples of neurons exhibiting Type V excitability include: olfactory bulb mitral cells \cite{heyward2001membrane} and striatal medium spiny neurons \cite{wilson1981spontaneous}. More examples are listed at Scholarpedia journal article \cite{wilson2008up}.

\section{Singularly perturbed global phase portrait analysis of Types IV and V excitable models}
\label{SEC: Type IV and V phase portrait}

Under timescale separation assumption ({\it i.e.} $0<\epsilon\ll1$), Types IV and V excitable models exhibit four distinct signatures with respect to Types I and II: {\it (i)} the existence of a saddle-homoclinic bifurcation and {\it (ii)} ADPs in Type IV; {\it (iii)} bistability and {\it (iv)} spike latency in both Types IV and V. A global phase portrait analysis based on geometrical singular perturbations provides an analytical explanation for the occurrence of these signatures \emph{solely} in Types IV and V.

\subsection{Singularly perturbed saddle-homoclinic bifurcation and robust ADP generation in Type IV excitable models}

In this section we prove the existence of the saddle-homoclinic loop in Type IV excitable systems. We then rely on this analysis to provide a qualitative picture of the ADP generation mechanism in these models. In the remainder of the section we assume that the pair $(V_0,n_0)$ lies in Region IV of Figure \ref{FIG: TC PC}.

We start by briefly recalling some basic results of geometrical singular perturbation theory, using (\ref{EQ: mirrored FHN dynamics}) as an explicit example. The interested reader will find in \cite{JONES95} an excellent introduction to the topic, and in \cite{KRSZ2001relax,KRSZ01,KRSZ01a} some recent extensions on which we rely for the forthcoming analysis. The time rescaling $\tau:=\epsilon t$ transforms (\ref{EQ: mirrored FHN dynamics}) into the equivalent system
\begin{subequations}\label{EQ: slow time scale}
\begin{eqnarray}
\epsilon\dot V&=&V-\frac{V^3}{3}-n^2+I_{app}\IEEEyessubnumber\\
\dot n&=&\epsilon(n_{\infty}(V-V_0)+n_0-n)\IEEEyessubnumber,
\end{eqnarray}
\end{subequations}
which describes the dynamics (\ref{EQ: mirrored FHN dynamics}) in the slow timescale $\tau$.
In the limit $\epsilon=0$, commonly referred to as the \emph{singular limit}, one obtains from (\ref{EQ: mirrored FHN dynamics}) and (\ref{EQ: slow time scale}) two new dynamical systems: the \emph{reduced dynamics}
\begin{subequations}\label{EQ: reduced problem}
\begin{eqnarray}
0&=&V-\frac{V^3}{3}-n^2+I_{app}\IEEEyessubnumber\\
\dot n&=&\epsilon(n_{\infty}(V-V_0)+n_0-n)\IEEEyessubnumber,
\end{eqnarray}
\end{subequations}
which evolve in the slow timescale $\tau$, and the \emph{layer dynamics}
\begin{subequations}\label{EQ: layer problem}
\begin{eqnarray}
\dot V&=&V-\frac{V^3}{3}-n^2+I_{app}\IEEEyessubnumber\\
\dot n&=&0\IEEEyessubnumber,
\end{eqnarray}
\end{subequations}
which evolve in the fast timescale $t$. Figure \ref{FIG: geometrical singular perturbation theory full}(a) depicts the fast-slow dynamics (\ref{EQ: reduced problem}),(\ref{EQ: layer problem}). The main idea behind geometrical singular perturbation theory is to combine the analysis of the reduced and layer dynamics to derive conclusions about the behavior of the nominal system, {\it i.e.} with $\epsilon>0$.
\begin{figure}
\centering
\includegraphics[width=0.9\textwidth]{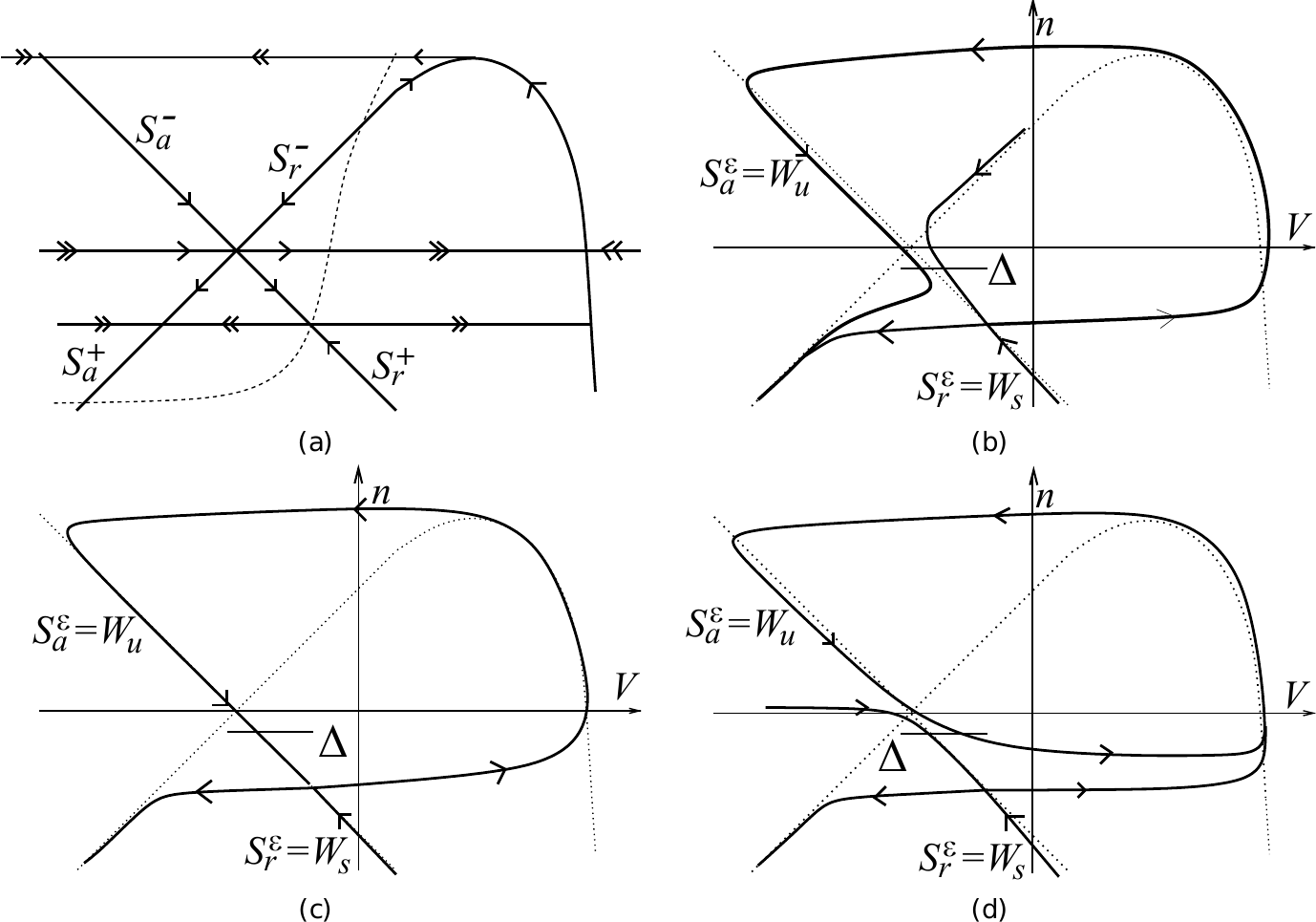}
\caption{{\bf Phase-portrait of (\ref{EQ: mirrored FHN dynamics}) for $(n_0,V_0)$ in Region IV of the parameter chart in Figure \ref{FIG: TC PC} and different values of $\epsilon$ and $I$.} (a):  fast-slow dynamics of (\ref{EQ: mirrored FHN dynamics}) for $\epsilon=0$ and $I=I^\star$. The attractive (resp. repelling) branches of the critical manifold $S_0$ above/below the transcritical singularity are denoted by $S_a^+/S_a^-$ (resp. $S_r^+/S_r^-$). (b): Continuation of the slow attractive $S_{a}^\epsilon$ and repelling $S_r^\epsilon$ manifolds for $\epsilon>0$ and $I_{app}<I^\star+I_c(\sqrt{\epsilon})$, where $I_c(\sqrt{\epsilon})$ is defined as in Theorem \ref{THM: KRSZ01 thm reformulation}. (c): Continuation of $S_{a}^\epsilon$ and $S_r^\epsilon$ for $\epsilon>0$ and $I=I^\star+I_c(\sqrt{\epsilon})$. (d): Continuation of $S_{a}^\epsilon$ and $S_r^\epsilon$ for $\epsilon>0$ and $I>I^\star+I_c(\sqrt{\epsilon})$.}\label{FIG: geometrical singular perturbation theory full}
\end{figure}

The reduced dynamics (\ref{EQ: reduced problem}) is a dynamical system on the set $$S_0:=\left\{(V,n)\in\RR^2:\ V-\frac{V^3}{3}-n^2+I_{app}=0\right\},$$ usually called the \emph{critical manifold}. The points in $S_0$ are indeed critical points of the layer dynamics (\ref{EQ: layer problem}). More precisely, portions of $S_0$ on which $\frac{\partial\dot V}{\partial V}$ is non-vanishing are normally hyperbolic invariant manifolds of equilibria of the layer dynamics, whose stability is determined by the sign of $\frac{\partial\dot V}{\partial V}$. Conversely, points in $S_0$ where $\frac{\partial\dot V}{\partial V}=0$ constitute degenerate equilibria. In particular, the layer dynamics (\ref{EQ: layer problem}) exhibits, for $I_{app}=I^\star$, two degenerate equilibria\footnote{A third degenerate equilibria is at the minimum of the $V$-nullcline that is specular to the maximum with respect to the line $v=0$, but it plays no dynamical role and is not considered here.}. As depicted in Figure \ref{FIG: geometrical singular perturbation theory full}(a) they are given by the self-intersection of the $V$-nullcline, which is the transcritical organizing center in Figure \ref{FIG: trans organizing center}, and by the fold singularity at the maximum of the upper branch of the $V$-nullcline.

The basic result of geometrical singular perturbation theory, due to Fennichel \cite{FENICHEL79}, is that, for $\epsilon$ sufficiently small, non-degenerate portions of $S_0$ persist as nearby normally hyperbolic locally invariant manifolds $S^\epsilon$ of (\ref{EQ: mirrored FHN dynamics}). More precisely, the \emph{slow manifold} $S^\epsilon$ lies in a neighborhood of $S_0$ of radius $\mathcal O(\epsilon)$. The dynamics on $S^\epsilon$ is a small perturbation of the reduced dynamics (\ref{EQ: reduced problem}). We point out that $S^\epsilon$ may not be unique, but is determined only up to $\mathcal O(e^{-c/\epsilon})$, for some $c>0$. That is, two different choices of $S^\epsilon$ are exponentially close (in $\epsilon$) one to the other. Since the presented results are independent of the particular $S^\epsilon$ considered, we let this choice be arbitrary. The trajectories of the layer dynamics perturb to a stable and an unstable invariant foliations with basis $S^\epsilon$.

The analysis near degenerate points is more delicate. Only recently some works have treated this problem in its full generality for different types of singularities \cite{KRSZ2001relax,KRSZ01,KRSZ01a}. Figure \ref{FIG: geometrical singular perturbation theory full} (b),(c),(d) sketch the extension of the attractive slow manifold $S_a^\epsilon$ after the fold point, and the three possible ways in which $S_a^\epsilon$ and the repelling slow manifold $S_r^\epsilon$ can continue after the transcritical singularity, depending on the injected current.

The result depicted in Figure \ref{FIG: geometrical singular perturbation theory full} relies on the following analysis, adapted from \cite{KRSZ01}.

Let $\Delta:=\{(V,n)\in\mathbb R^2:\ V_-\leq V \leq V_+,\ n=\rho\}$, be the section depicted in Figure \ref{FIG: geometrical singular perturbation theory full}, where $\rho<0$ and $|\rho|$ is sufficiently small, and $V_-,V_+$ are such that $\Delta\cap S_r^-\neq\emptyset$.
For a given $\epsilon>0$, let $q_{a,\epsilon}:=\Delta\cap S_a^\epsilon$ and $q_{r,\epsilon}:=\Delta\cap S^\epsilon_r$ be the intersections, whenever they exist, of respectively the attractive and repelling invariant submanifolds $S_a^\epsilon$ and $S_r^\epsilon$ with the section $\Delta$.
The following theorem reformulates in a compact way the discussion contained in Remark 2.2 and Section 3 of \cite{KRSZ01}\footnote{The first author is thankful to Prof. Szmolyan for his useful comments.} for the system with inputs (\ref{EQ: mirrored FHN dynamics}).

\begin{theorem}[Adapted from \cite{KRSZ01}]\label{THM: KRSZ01 thm reformulation}
Consider the system (\ref{EQ: mirrored FHN dynamics}). Then there exists $\epsilon_0>0$ and a smooth function $I_c(\sqrt{\epsilon})$, defined on $[0,\epsilon_0]$ and satisfying $I_c(0)=0$, such that, for all $\epsilon\in(0,\epsilon_0]$, the following assertions hold
\begin{enumerate}
\item $q_{a,\epsilon}=q_{r,\epsilon}$ if and only if $I_{app}=I^\star+I_c(\sqrt{\epsilon})$
\item there exists an open interval $A\ni I^\star+I_c(\sqrt{\epsilon})$, such that, for all $I\in A$, it holds that $\Delta\cap S_a^\epsilon\neq\emptyset$, $\Delta\cap S_r^\epsilon\neq\emptyset$, and
$$\frac{\partial}{\partial I_{app}}\left(q_{a,\epsilon}-q_{r,\epsilon} \right)>0. $$
\end{enumerate}
\end{theorem}
Figure \ref{FIG: geometrical singular perturbation theory full} illustrates this result.
\begin{remark}
The function $I_c(\sqrt{\epsilon})$ is related to the function $\lambda_c(\sqrt{\epsilon})$ defined in \cite[Remark 2.2]{KRSZ01} by $I_c(\sqrt{\epsilon}):=\epsilon\lambda_c(\sqrt{\epsilon})$. Similarly, given $\epsilon>0$, the parameter $I_{app}$ appearing in Theorem \ref{THM: KRSZ01 thm reformulation} is just the re-scaling $I_{app}=\epsilon\lambda+I^\star$ of the parameter $\lambda$ appearing in \cite[Remark 2.2 and Sections 3]{KRSZ01}.
\end{remark}

Theorem \ref{THM: KRSZ01 thm reformulation} implies the existence of the saddle-homoclinic bifurcation in the mirrored FitzHugh-Nagumo model (\ref{EQ: mirrored FHN dynamics}) with parameters $(V_0,n_0)$ belonging to Region IV of Figure \ref{FIG: TC PC}. In this case, as illustrated in Figure \ref{FIG: geometrical singular perturbation theory full}(b,c,d), the slow attractive manifold $S_a^\epsilon$ (resp. slow repelling manifold $S_r^\epsilon$) coincides with the unstable manifold $W_u$ (resp. stable manifold $W_s$) of the saddle point, as it can be proved via simple qualitative arguments. Thus, for $I_{app}<I^\star+I_c(\sqrt{\epsilon})$, the unstable manifold $W_u$ continues after the transcritical singularity on the left of $W_s$, toward the stable node. See Figure \ref{FIG: geometrical singular perturbation theory full}(b). For $I_{app}=I^\star+I_c(\sqrt{\epsilon})$, $W_u$ extends after the transcritical point to $W_s$, forming the saddle-homoclinic trajectory, as sketched in Figure \ref{FIG: TC exc types} and Figure \ref{FIG: geometrical singular perturbation theory full}(c). For $I_{app}>I^\star+I_c(\sqrt{\epsilon})$, the unstable manifold of the saddle $W_u$ continues after the transcritical singularity on the right of $W_s$, and spirals toward an exponentially stable limit cycle, whose existence can be proved with similar geometrical singular perturbation theory arguments (see for instance \cite{KRSZ2001relax}). This situation is the one depicted in Figure \ref{FIG: geometrical singular perturbation theory full}(d).

The existence of the saddle-homoclinic bifurcation near the singular limit $\epsilon=0$ is a direct consequence of the existence of a singular connection between the stable and unstable manifold of the saddle, as sketched in Figure \ref{FIG: singular st-unst conncection}.
The existence of the saddle-homoclinic bifurcation can be seen as the persistence of this singular saddle-homoclinic loop in the non-singular dynamics. The same loop being absent in neuron models with an $N$-shaped nullcline, as discussed below, we identify with the singularly perturbed saddle-homoclinic bifurcation sketched in Figure \ref{FIG: TC exc types} (bottom right) as the signature of Type IV excitable models.
\begin{figure}
\centering
\includegraphics[width=0.5\textwidth]{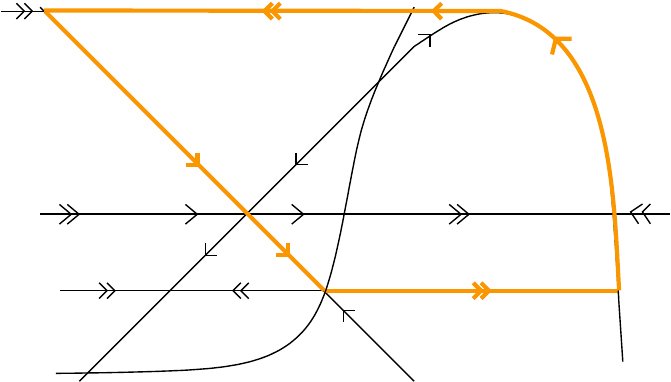}
\caption{Singularly perturbed non-trivial saddle-homoclinic connection near the transcritical singularity. This loop persists in the non-singular limit.}\label{FIG: singular st-unst conncection}
\end{figure}

When $I_{app}<I^\star+I_c(\sqrt{\epsilon})$ the above analysis ensures the robust generation of ADPs. The trajectory relaxation to the stable fixed point is indeed guided by the normally hyperbolic attractive manifold $S_a^\varepsilon$. As illustrated in Figure \ref{FIG: geometrical singular perturbation theory full}(b), the voltage is not monotone as the trajectory slides along this manifold, corresponding to an ADP. The normally hyperbolicity of $S_a^\varepsilon$ ensures that the ADP generation is robust to external perturbations. Some consequences for the modeling of neurons exhibiting ADPs can be found in \cite{DRFRSESE_PLoS_sub}.

\subsubsection*{Absence of singularly perturbed saddle-homoclinic bifurcation and ADPs in competitive models}\label{SEC: no N-shaped sadhom}

When the stable fixed point and the saddle belong to the upper $N$-shaped competitive branch of the voltage nullcline ({\it i.e.} Type I excitability), the model cannot exhibit saddle-homoclinic bifurcation.
As sketched in Figure \ref{FIG: no sad-hom connection}, in the fast-slow dynamics there are no nontrivial saddle-homoclinic connections. By persistence arguments \cite{FENICHEL79,KRSZ2001relax}, this implies that the nontrivial intersection of the stable and unstable manifolds of the saddle is empty also away from the singular limit.

The prediction of singular perturbation theory of course does not contradict the existence of a saddle-homoclinic bifurcation in competitive models, it only precludes it for sufficiently small values of $\epsilon$. A well-known saddle-homoclinic bifurcation is described in Morris-Lecar model \cite{morris1981voltage} for the barnacle giant muscle fiber. The model is purely competitive but a saddle-homoclinic bifurcation is possible around the N-shaped voltage nullcline (see for instance \cite[Section 3.4.3]{ERTE10}). However, such a bifurcation cannot persist with a strong timescale separation, which suggests that it might be less relevant in the context of neuronal modeling.

\begin{figure}[h!]
\centering
\includegraphics[width=0.9\textwidth]{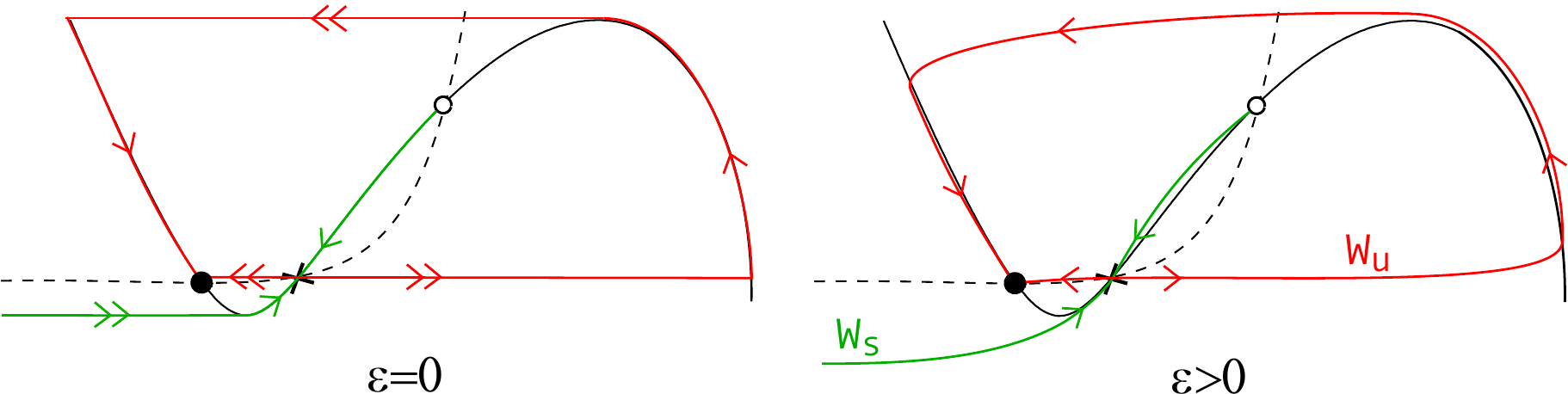}
\caption{Absence of a non-trivial singularly perturbed saddle-homoclinic connection around the $N$-shaped $V$-nullcline branch. Such connection is absent also for sufficiently small $\epsilon>0$.}\label{FIG: no sad-hom connection}
\end{figure}

The generation of ADPs must also be excluded when the stable fixed point belongs to the upper branch of the voltage nullcline. The relaxation to rest is indeed guided by the left attractive branch of the voltage nullcline, along which the voltage is monotone. ADPs can be generated within purely competitive models only by resorting to a non-physiological state reset mechanism \cite[Section 8.3]{IZHIKEVICH2007}.

\subsection{Singularly perturbed bistability in Type IV and V excitable systems}
\label{SEC: SP bistability}

The geometrical singular perturbation machinery introduced above can be used to prove the persistence of bistability near the singular limit $\epsilon=0$ for both Type IV and V excitable systems. The analysis is sketched in Figure \ref{FIG: SP bist}. The underlying key ingredient is the existence of singularly perturbed separatrix $W_s^0$ passing by the saddle point. This object persists in the non-singular limit as a normally hyperbolic saddle stable manifold $W_s^\epsilon$ that separates the stable down-state from the up-state attractor (see the bottom center plot in Figures \ref{FIG: Type IV}, \ref{FIG: Type Va}, and \ref{FIG: Type Vb}).
The phase portraits in the non-singular limit can all be deduced by the results in \cite{KRSZ2001relax,KRSZ01,KRSZ01a}.

\begin{figure}
\centering
\includegraphics[width=0.8\textwidth]{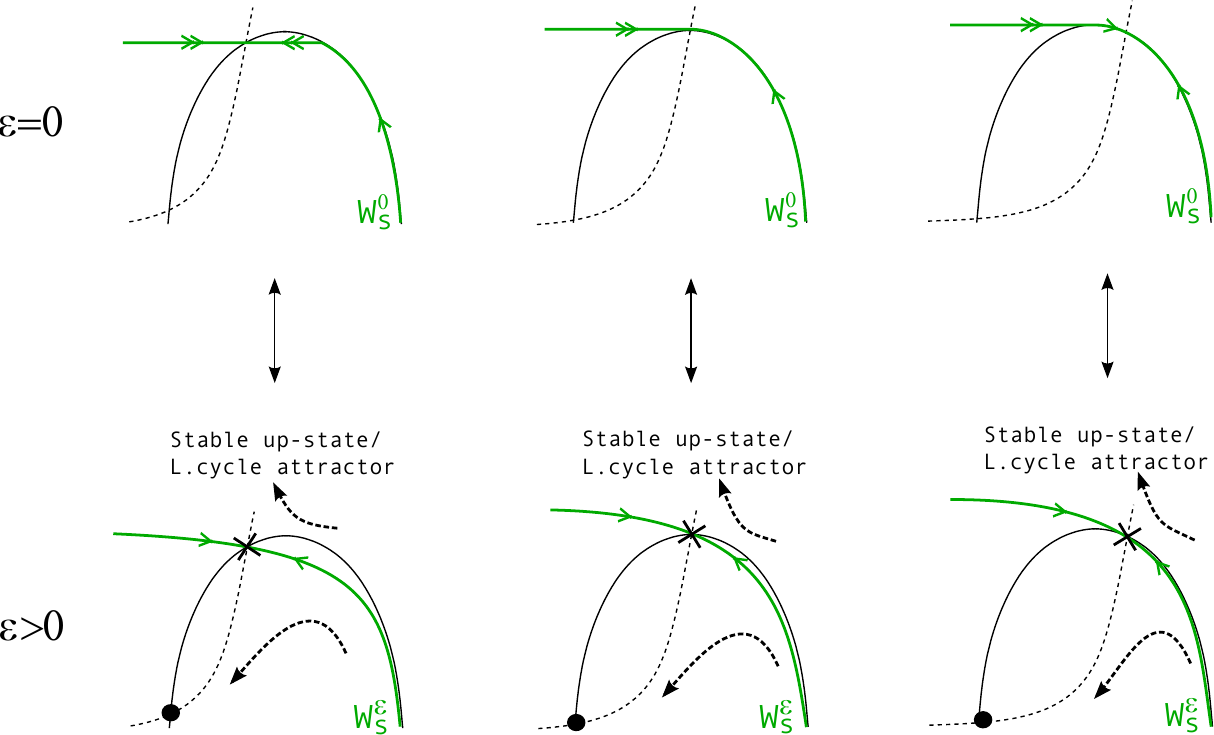}
\caption{Existence of a singularly perturbed saddle separatrix $W_s^0$ on the lower branch of the voltage nullcline in Type IV and V excitable systems for $I>I^\star$ and different nullcline intersections. Such object persists in the non-singular limit as the saddle stable manifold $W_s^\epsilon$ which separates the basin of attraction of the lower and upper attractors.}\label{FIG: SP bist}
\end{figure}
\subsubsection*{Absence of singularly perturbed bistability in competitive models}
As opposed to Types IV and V, bistability is not observed in Types I, II, and III excitable models near the singular limit $\epsilon=0$. For Type I this can be deduced by the absence of a singularly perturbed separatrix (cf. Figure \ref{FIG: no sad-hom connection}) and standard persistence arguments. For Type II, one can invoke the fact that the bistable range of the subcritical Hopf bifurcation shrinks to zero in the singular limit (see for instance \cite[Theorem 3.1]{KRSZ01a}). Type III excitable models are, by definition, always globally asymptotically stable.

\subsection{Singularly perturbed spike latency in Type IV and V excitable systems}

Spike latency appears when the trajectory travels the ``ghost'' of the center manifold of the saddle node bifurcation. It is prominent in Type IV and V excitable models, since in these models this center manifold is attractive. Indeed, the recovery variable nullcline being strictly monotone increasing, the saddle-node bifurcation lies on the lower attractive branch of the voltage nullcline (see Figure \ref{FIG: latency}). By standard persistence arguments, this implies that its center manifold $W_c^\epsilon$ is strongly attractive near the singular limit. A consequence of this attractiveness is that, after the bifurcation at $I_{app}=I_{SN}$, an originally resting trajectory is attracted toward the ghost of $W_c^\epsilon$ between the two nullclines, where the vector field magnitude is proportional to $I_{app}-I_{SN}$. The passage time thus diverges to infinity as $I_{app}\searrow I_{SN}$, corresponding to a prominent and robust spike-latency.
\begin{figure}
\centering
\includegraphics[width=\textwidth]{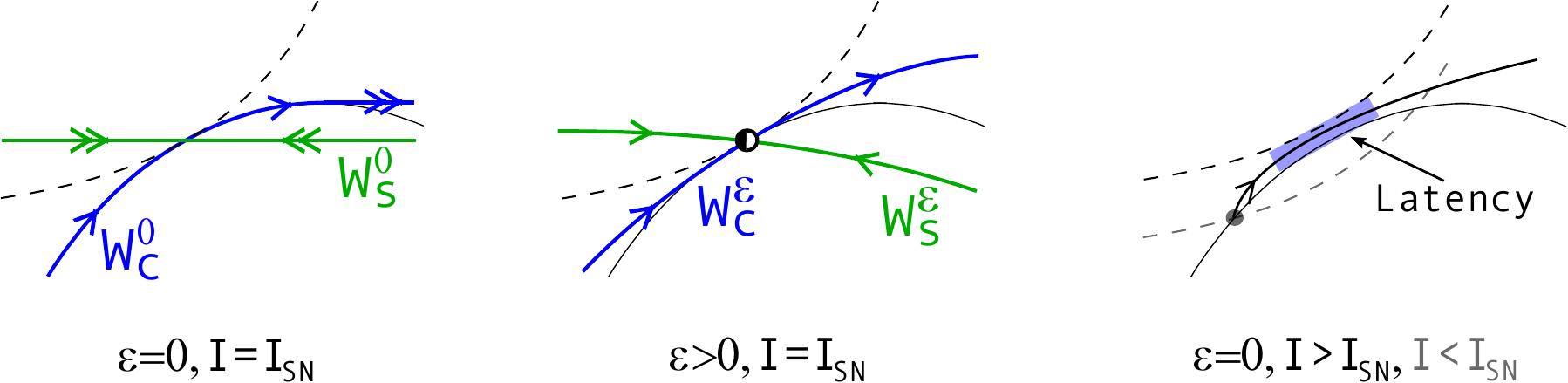}
\caption{Mechanism of spike latency in Types IV and V excitable models. Left: slow-fast dynamics near the saddle-node bifurcation. Center: stable $W_s^\epsilon$ and center $W_c^\epsilon$ manifolds of the bifurcation for $\epsilon>0$. Right: the ghost of $W_c^\epsilon$ (depicted as the blue rectangle) is attractive for $I>I_{SN}$ corresponding to prominent and robust spike latency.}\label{FIG: latency}
\end{figure}

\subsubsection*{Absence of singularly perturbed spike-latency in competitive models}

As opposed to Types IV and V, the saddle node bifurcation of Type I excitable models lies on the upper repulsive (right) branch of the voltage nullcline (recall again that the recovery variable nullcline is strictly monotone increasing). Hence, attractiveness of the bifurcation center manifold, and thus of its ghost, is lost near the singular limit. Even though for finite $\epsilon$ this center manifold can be attractive (see Figure \ref{FIG: no latency}), its ghost (between the two nullclines) does not attract an originally resting trajectory: the trajectory necessarily travels below the recovery variable nullcline  where the vector field has finite magnitude independently of the distance from the bifurcation. These simple arguments show that competitive excitable models and, in particular, Type I excitable models can not exhibit spike latency.
\begin{figure}
\centering
\includegraphics[width=\textwidth]{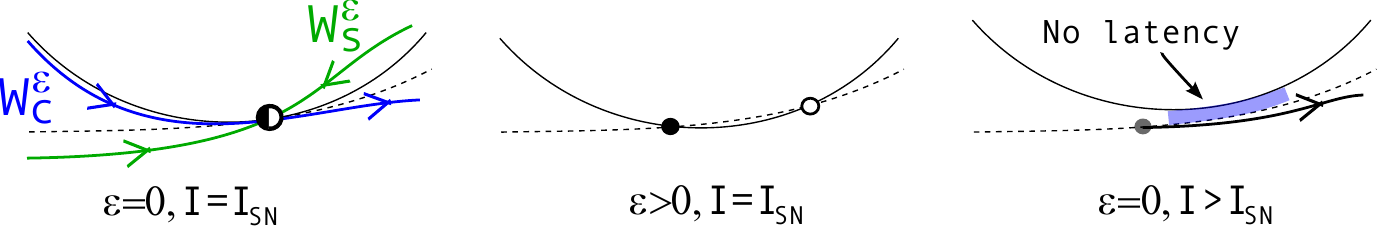}
\caption{Absence of spike latency in Type I excitable models. Left: the saddle-node bifurcation in Type I excitable models with its center $W_c^\epsilon$ and stable $W_s^\epsilon$ manifolds. Center: the stable and the saddle fixed points before the bifurcation. Right: the ghost of $W_c^\epsilon$ (blue rectangle) after the biurcation and the behavior of an originally resting trajectory.}\label{FIG: no latency}
\end{figure}

\section{Discussion}
\label{SEC: discussion}
\subsection*{Mirroring the FitzHugh-Nagumo equation accounts for cooperative gating variables}

Mirroring the FitzHugh-Nagumo equation was motivated by the inclusion, in a simple model of neuronal excitability, of the transcritical bifurcation recently observed in a planar reduction of the Hodgkin-Huxley model augmented with an activating calcium current. This purely geometrical construction is actually tightly linked with the underlying electrophysiology: in the upper FitzHugh-Nagumo like part of the phase portrait the model is competitive, as it is the original Hodgkin-Huxley model; in the mirrored part it is cooperative, accounting for cooperative gating variables, such as the activation of calcium current. To the best of our knowledge, this unified picture is not present in existing planar models of neuronal excitability, that are purely competitive.

\subsection*{Cooperativity unravels a pitchfork bifurcation organizing old and new types of excitability}

The distinctive effects of cooperative gating variables on neuronal excitability are widely studied in high dimensional conductance-based models and in {\it in vitro} recordings. However, these same signatures can not be reproduced in competitive models and the underlying dynamical mechanisms have remained obscure to date.

The unfolding of a pitchfork bifurcation organizing the proposed model reveals how the inclusion of cooperative variables changes neuronal excitability. The obtained parameter chart recovers the three known types of (competitive) excitability and unmasks two novel types that we naturally defined as Types IV and V. The defining condition of Types IV and V excitability is cooperativity, {\it i.e.}
\begin{equation*}
\frac{\partial\dot V}{\partial n}\frac{\partial\dot n}{\partial V}>0,
\end{equation*}
at the hyperpolarized stable steady state. This sole condition ensures the presence of the electrophysiological signatures of cooperative gating variables, in particular, bistability. In a forthcoming publication, we will show that this defining condition also holds in higher-dimensional models and is not an artifact of planar reductions.
 
\subsection*{Bistability and bursting in singularly perturbed excitable models}
\label{SEC: bist and burst in SP systems}

A distinct feature of Types IV and V excitable models with respect to the three other types is the existence of a finite range of bistability.
Bistability has been described in the context of Types I and II excitability as well (see e.g. \cite{Rinzel:1989:ANE:94605.94613}), but it was shown in Section \ref{SEC: SP bistability} that, in all these situations, the bistability range shrinks to zero as the timescale separation is increased. In contrast, the stable manifold of the saddle point that separates the two basin of attraction in Types IV and V excitability is a robust (hyperbolic) geometric object that persists in the singular limit. Because neurons do exhibit a pronounced timescale separation, the robustness of the bistable range in Types IV and V is thought to be an important feature of excitable models that are not purely competitive.

The relevance of bistability in excitable models lies in its relevance for model bursting. Bursting is typically the result of a slow adaptation variable that modulates the applied current across the bistability range, creating a hysteresis loop between the stable down-state and the up-state attractor. An important conjecture derived from our analysis is that bursting will persist near the singularly perturbed limit of model (\ref{EQ: mirrored FHN dynamics}) only in Type IV and V excitability, that is only in the presence of cooperative ionic channels. In other words, Types IV and V excitability would be the essential sources of bursting in singularly perturbed models.

\section*{Methods}

The parameter chart in Figures \ref{FIG: TC PC} and \ref{FIG: TC exc types} have been numerically drawn using MATLAB\footnote{{\tt http://www.mathworks.com}} and modified with the Open Source vector graphics editor Inkscape\footnote{\tt http://inkscape.org}.\\
Phase portrait were hand-drawn with Inkscape.\\
The bifurcation diagrams in Figures \ref{FIG: Type IV}, \ref{FIG: Type Va}, and \ref{FIG: Type Vb} have been obtained with XPP environment\footnote{\tt http://www.math.pitt.edu/$\thicksim$bard/xpp/xpp.html}.

\bibliographystyle{unsrt}
\bibliography{refs}

\newcommand{\SortNoop}[1]{} 
  \def\nesic{Ne\v{s}i\'{c}\,} %
  \def\astrom{{\SortNoop{As}\AA}str{\"{o}}m\,}\let\c=\cedille
\begin{thebibliography}{10}

\bibitem{hodgkin1948local}
A.L. Hodgkin.
\newblock The local electric changes associated with repetitive action in a
  non-medullated axon.
\newblock {\em The Journal of physiology}, 107(2):165--181, 1948.

\bibitem{fitzhugh61}
R.~FitzHugh.
\newblock Impulses and physiological states in theoretical models of nerve
  membrane.
\newblock {\em Biophysical J.}, 1:445--466, 1961.

\bibitem{rinzel1985excitation}
J.~Rinzel.
\newblock Excitation dynamics: insights from simplified membrane models.
\newblock In {\em Fed. Proc}, volume~44, pages 2944--2946, 1985.

\bibitem{Rinzel:1989:ANE:94605.94613}
J.~Rinzel and G.~B. Ermentrout.
\newblock {\em Analysis of neural excitability and oscillations}, pages
  135--169.
\newblock MIT Press, Cambridge, MA, USA, 1989.

\bibitem{DRFRSESE_PLoS_sub}
G.~Drion, A.~Franci, V.~Seutin, and R.~Sepulchre.
\newblock A novel phase portrait for neuronal excitability.
\newblock 2012.
\newblock Submitted to: PLoS 1. Preprint available at: {\tt
  http://arxiv.org/abs/1112.2588}.

\bibitem{SEYDEL94}
R.~Seydel.
\newblock {\em Practical bifurcation and stability analysis}, volume~5 of {\em
  Interdisciplinary Applied Mathematics}.
\newblock Springer-Verlag, New York, third edition, 2010.

\bibitem{smith2008monotone}
H.L. Smith.
\newblock {\em Monotone dynamical systems: An introduction to the theory of
  competitive and cooperative systems}.
\newblock American Mathematical Soc., 2008.

\bibitem{plant1976mathematical}
R.E. Plant and M.~Kim.
\newblock Mathematical description of a bursting pacemaker neuron by a
  modification of the hodgkin-huxley equations.
\newblock {\em Biophysical journal}, 16(3):227--244, 1976.

\bibitem{McCormick92a}
D~A McCormick and J~R Huguenard.
\newblock A model of the electrophysiological properties of thalamocortical
  relay neurons.
\newblock {\em J Neurophysiol}, 68(4):1384--1400, 1992.

\bibitem{destexhe1996vivo}
A.~Destexhe, D.~Contreras, M.~Steriade, T.J. Sejnowski, and J.R. Huguenard.
\newblock In vivo, in vitro, and computational analysis of dendritic calcium
  currents in thalamic reticular neurons.
\newblock {\em The Journal of neuroscience}, 16(1):169--185, 1996.

\bibitem{Traub01081991}
R.~D. Traub, R.~K. Wong, R.~Miles, and H.~Michelson.
\newblock A model of a ca3 hippocampal pyramidal neuron incorporating
  voltage-clamp data on intrinsic conductances.
\newblock {\em Journal of Neurophysiology}, 66(2):635--650, 1991.

\bibitem{ERTE10}
G.~B. Ermentrout and D.~H. Terman.
\newblock {\em Mathematical Foundations of Neuroscience}.
\newblock Interdisciplinary Applied Mathematics. Springer, 2010.

\bibitem{IZHIKEVICH2007}
E.~M. Izhikevich.
\newblock {\em Dynamical Systems in Neuroscience: The Geometry of Excitability
  and Bursting}.
\newblock MIT Press, 2007.

\bibitem{zhan1999current}
X.J. Zhan, C.L. Cox, J.~Rinzel, and S.M. Sherman.
\newblock Current clamp and modeling studies of low-threshold calcium spikes in
  cells of the cat’s lateral geniculate nucleus.
\newblock {\em Journal of neurophysiology}, 81(5):2360--2373, 1999.

\bibitem{tateno2004threshold}
T.~Tateno, A.~Harsch, and H.P.C. Robinson.
\newblock Threshold firing frequency--current relationships of neurons in rat
  somatosensory cortex: type 1 and type 2 dynamics.
\newblock {\em Journal of neurophysiology}, 92(4):2283--2294, 2004.

\bibitem{sessley2002evidence}
S.~Sessley and R.J. Butera.
\newblock Evidence for type i excitability in molluscan neurons.
\newblock In {\em Engineering in Medicine and Biology, 2002. 24th Annual
  Conference and the Annual Fall Meeting of the Biomedical Engineering Society,
  EMBS/BMES Conference, 2002. Proceedings of the Second Joint}, volume~3, pages
  1966--1967. IEEE, 2002.

\bibitem{wechselberger2007canards}
M.~Wechselberger.
\newblock Canards.
\newblock {\em Scholarpedia}, 2(4):1356, 2007.

\bibitem{Messina197657}
C.~Messina and R.~Cotrufo.
\newblock Different excitability of type 1 and type 2 alpha-motoneurones: The
  recruitment curve of h- and m-responses in slow and fast muscles of rabbits.
\newblock {\em Journal of the Neurological Sciences}, 28(1):57 -- 63, 1976.

\bibitem{Gai01122009}
Yan Gai, Brent Doiron, Vibhakar Kotak, and John Rinzel.
\newblock Noise-gated encoding of slow inputs by auditory brain stem neurons
  with a low-threshold k+ current.
\newblock {\em Journal of Neurophysiology}, 102(6):3447--3460, 2009.

\bibitem{clay2008simple}
J.R. Clay, D.~Paydarfar, and D.B. Forger.
\newblock A simple modification of the hodgkin and huxley equations explains
  type 3 excitability in squid giant axons.
\newblock {\em Journal of The Royal Society Interface}, 5(29):1421--1428, 2008.

\bibitem{Hallworth20082003}
Nicholas~E. Hallworth, Charles~J. Wilson, and Mark~D. Bevan.
\newblock Apamin-sensitive small conductance calcium-activated potassium
  channels, through their selective coupling to voltage-gated calcium channels,
  are critical determinants of the precision, pace, and pattern of action
  potential generation in rat subthalamic nucleus neurons in vitro.
\newblock {\em The Journal of Neuroscience}, 23(20):7525--7542, 2003.

\bibitem{huguenard1992novel}
J.R. Huguenard and D.A. Prince.
\newblock A novel {T-type} current underlies prolonged {Ca(2+)}-dependent burst
  firing in {GABAergic} neurons of rat thalamic reticular nucleus.
\newblock {\em The Journal of neuroscience}, 12(10):3804--3817, 1992.

\bibitem{mccormick1997sleep}
D.A. McCormick and T.~Bal.
\newblock Sleep and arousal: thalamocortical mechanisms.
\newblock {\em Annual review of neuroscience}, 20(1):185--215, 1997.

\bibitem{johnson1992burst}
S.W. Johnson, V.~Seutin, and R.A. North.
\newblock Burst firing in dopamine neurons induced by n-methyl-d-aspartate:
  role of electrogenic sodium pump.
\newblock {\em Science}, 258(5082):665--667, 1992.

\bibitem{grace1984control}
A.A. Grace and B.S. Bunney.
\newblock The control of firing pattern in nigral dopamine neurons: burst
  firing.
\newblock {\em The Journal of neuroscience}, 4(11):2877--2890, 1984.

\bibitem{gray1996chattering}
C.M. Gray and D.A. McCormick.
\newblock Chattering cells: superficial pyramidal neurons contributing to the
  generation of synchronous oscillations in the visual cortex.
\newblock {\em Science}, 274(5284):109--113, 1996.

\bibitem{heyward2001membrane}
P.~Heyward, M.~Ennis, A.~Keller, and M.T. Shipley.
\newblock Membrane bistability in olfactory bulb mitral cells.
\newblock {\em The Journal of Neuroscience}, 21(14):5311--5320, 2001.

\bibitem{wilson1981spontaneous}
C.J. Wilson and P.M. Groves.
\newblock Spontaneous firing patterns of identified spiny neurons in the rat
  neostriatum.
\newblock {\em Brain research}, 220(1):67--80, 1981.

\bibitem{wilson2008up}
C.~Wilson.
\newblock Up and down states.
\newblock {\em Scholarpedia journal}, 3(6):1410, 2008.

\bibitem{JONES95}
C.~K.~R. Jones.
\newblock Geometric singular perturbation theory.
\newblock In {\em Dynamical systems. Springer Lecture Notes in Math. 1609},
  pages 44--120, Berlin, 1995. Springer.

\bibitem{KRSZ2001relax}
M.~Krupa and P.~Szmolyan.
\newblock Relaxation oscillation and canard explosion.
\newblock {\em J. Differential Equations}, 174(2):312--368, 2001.

\bibitem{KRSZ01}
M.~Krupa and P.~Szmolyan.
\newblock Extending slow manifolds near transcritical and pitchfork
  singularities.
\newblock {\em Nonlinearity}, 14:1473--1491, 2001.

\bibitem{KRSZ01a}
M.~Krupa and P.~Szmolyan.
\newblock Extending geometrical singular perturbation theory to nonhyperbolic
  points - folds and canards points in two dimensions.
\newblock {\em SIAM J. Math. Analysis}, 33(2):286--314, 2001.

\bibitem{FENICHEL79}
N.~Fenichel.
\newblock Geometric singular perturbation theory.
\newblock {\em J. Diff. Eq.}, 31:53--98, 1979.

\bibitem{morris1981voltage}
C.~Morris and H.~Lecar.
\newblock Voltage oscillations in the barnacle giant muscle fiber.
\newblock {\em Biophysical journal}, 35(1):193--213, 1981.

\end{thebibliography}

\end{document}